 \def\cocoa{{\hbox{\rm C\kern-.13em o\kern-.07em C\kern-.13em o\kern-.15em A}}}
\newtheorem{theorem}{Theorem}[section]
\newtheorem{lemma}[theorem]{Lemma}
\newtheorem{proposition}[theorem]{Proposition}
\theoremstyle{definition}
\newtheorem{remark}[theorem]{Remark}
\newtheorem{definition}[theorem]{Definition}
\newtheorem{notation}[theorem]{Notation}
\newcommand {\Hom}{\mathrm{Hom}}
\newcommand {\Ext}{\mathrm{Ext}}
\newcommand {\ext}{\mathrm{ext}}
\newcommand {\Hilb}{\mathcal{H}\kern -0.25ex{\mathit ilb\/}}
\newcommand {\fm}{\mathfrak{m}}
\newcommand {\fe}{\mathfrak{e}}
\newcommand {\fb}{\mathfrak{b}}
\newcommand {\fd}{\mathfrak{d}}
\newcommand{\Pic}{\operatorname{Pic}}
\newcommand{\Prior}{\operatorname{Prior}}
\newcommand{\Num}{\operatorname{Num}}
\newcommand{\h}{\operatorname{h}}
\newcommand{\Ho}{\operatorname{H}}
\begin{document}

\title[Higher rank prioritary bundles]
{Higher rank prioritary bundles on ruled surfaces and their global sections}

\author[L.\ Costa, I. Macías Tarrío]{L.\ Costa$^*$, I. Macías Tarrío$^{**}$}

\address{Facultat de Matem\`atiques i Inform\`atica,
Departament de Matem\`atiques i Inform\`atica, Gran Via de les Corts Catalanes
585, 08007 Barcelona, SPAIN } \email{costa@ub.edu}

\address{Facultat de Matem\`atiques i Inform\`atica,
Departament de Matem\`atiques i Inform\`atica, Gran Via de les Corts Catalanes
585, 08007 Barcelona, SPAIN} \email{irene.macias@ub.edu}

\date{\today}
\thanks{$^*$ Partially supported by PID2020-113674GB-I00.}
\thanks{$^{**}$ Partially supported by PID2020-113674GB-I00.}

\subjclass{14J60, 14J26}

\begin{abstract}
Let $X$ be a ruled surface over a nonsingular curve $C$ of genus $g\geq0$. The main goal of this paper is to construct simple prioritary vector bundles of any rank $r$ on $X$ and to give effective bounds for the dimension of their module of global sections. 
\end{abstract}

%%%%%%%%%%%%%%%%%%%%%%%%%%%%%%%

\maketitle

\tableofcontents
\section{Introduction}
Since the early seventies, the theory of vector bundles on projective varieties has become one of the mainstreams in algebraic geometry. For instance, they offer a valuable point of view in order to better understand the geometry of algebraic varieties. Despite huge achievements in vector bundles theory, many naif questions still remain open. Since the whole category of vector bundles on  a projective variety is usually unwieldy, one has to impose some restrictions on the families of vector bundles under consideration. One of the approaches that has been used recently deals with the cohomological properties of vector bundles. That is, impose some cohomological conditions on the bundles to restrict the class of bundles to study. Nowadays there are many nice papers about vector bundles without intermediate cohomology (ACM bundles) and even a lot of them concern bundles without intermediate cohomology with the maximum number of global sections (Ulrich bundles). The goal of this paper is to study a class of vector bundles, called prioritary bundles, that share a strong cohomological property (see Definition \ref{prioritary_def}). Prioritary sheaves were introduced in the early nineties by Hirschowitz and Laszlo in \cite{Hirschowitz} to study vector bundles on $\mathbb{P}^2$ and, later on, the notion was generalized by Walters in \cite{Walter}.    Apart from the fact that prioritary bundles have a nice cohomological description, they also play an important role in the study of the moduli space of $H$-stable vector bundles.  In fact, under certain conditions on the polarization $H$, $H$-stable vector bundles are prioritary and the moduli space of $H$-stable bundles is an open subset inside the moduli stack of prioritary bundles. Hence, some questions concerning stable bundles can be deduced from the study of the corresponding prioritary bundles. Stability is one of the classical notions used to restrict the class of bundles to consider. For example, prioritary bundles have been used to describe the Picard group of the moduli space of sheaves on quadric surfaces (see \cite{Pedchenko}) and the weak Brill-Noether property (see \cite{coskun}). Hence in some sense prioritary bundles link both approaches.  

In this paper we will pay attention on prioritary bundles on ruled surfaces. It is not difficult to obtain good families of rank two prioritary bundles on algebraic surfaces, but, in general, the difficulties appear when we try to deal with vector bundles of rank $r$ bigger than two, that is when the rank is bigger than the dimension of the base space. On one hand, we will be focused on the  existence of rank $r \geq 3$ prioritary bundles on ruled surfaces  but  we will also be  interested in their module of global sections. In fact, we will be focused on the existence of prioritary bundles with certain number of independent global sections. This study is the first step in the study of subvarieties of Brill-Noether type since effective bounds of the dimension of the module of global sections allows to determine when these subvarieties are non empty (\cite{nuestro}  and \cite{CM2}). Finally let us point out that all the prioritary bundles that we construct are simple. In particular this guarantees that they are indecomposable.

\vspace{3mm}

Now we will outline the structure of the paper.
In Section 2, we will fix some notation and we will recall some basic facts about ruled surfaces and also some properties of prioritary vector bundles on them. We will end the section with a result (Theorem \ref{thprior1}) that will allow us to construct recursively prioritary bundles in subsequent sections. The goal of  Section 3, is to construct rank $3$ simple prioritary vector bundles with many sections. We will use different constructions according to their normalized first Chern class  (see Theorems \ref{prop_ej3}, \ref{th2derk3} and \ref{th3derk3}). 
In Section 4, in Theorems \ref{thrk4''} and \ref{th_rk4} we will construct rank $4$ prioritary vector bundles with many sections, using recursively the results given in the previous section. 
Finally, in Section 5, we will prove (see Theorems \ref{th_rgr1} and \ref{th_rg_r2} ) the existence of simple prioritary  bundles of arbitrarily large rank. To do so, we will construct general families of arbitrary rank prioritary bundles and we will give effective lower bounds for the dimension of their space of global sections. Finally, for the case $r=4$, we will compare the results of this section with the ones of Section 4.

\vspace{3mm}

\noindent {\bf Notation:}
Throughout this paper we will work on an algebraically closed field $K$ of characteristic $0$.

\vspace{0.3cm}
\noindent\textbf{Acknowledgements}. 
The second author expresses gratitude to professor Marian Aprodu for his advice and feedback about this work, and for his hospitality in her stays in Bucharest.
%\vspace{0.2cm}

\section{Preliminaries on prioritary sheaves}
The first goal of this section is to fix some notation concerning ruled surfaces and recall basic facts about prioritary bundles on them. We will end the section with a result that allows us to construct simple prioritary bundles recursively.

A ruled surface is a surface $X$, together with a surjective map $\pi: X\rightarrow C$ to a nonsingular curve $C$ of genus $g\geq0$ such that, for every point $y\in C$, the fibre $X_y$ is isomorphic to $\mathbb{P}^1$, and such that $\pi$ admits a section. Recall that a vector bundle $\mathcal{E}$ on $C$ is called normalized if $h^0(C,\mathcal{E})>0$ and  $h^0(C,\mathcal{E}(\mathfrak{d}))=0$ for any divisor $\mathfrak{d}$ on $C$ of negative degree.  A ruled surface is also defined as the projectivization $X=\mathbb{P}(\mathcal{E})$ of a normalized rank $2$ bundle $\mathcal{E}$ on $C$. Let $\mathfrak{e}$ be the divisor on $C$ corresponding to the invertible sheaf $\wedge^2\mathcal{E}$ and let us define $e:=-\deg(\mathfrak{e})$. We will assume $e\geq0$.

Let $C_0\subseteq X$ be a section such that $\pi_*\mathcal{O}_X(C_0) \cong \mathcal{E}$ and let $f$ be a fiber of $\pi$. We have that $\Pic(X)\cong \mathbb{Z}\bigoplus \pi^{*}\Pic(C)$, where $\mathbb{Z}$ is generated by $C_0$. Also $\Num(X)\cong \mathbb{Z}\bigoplus \mathbb{Z}$, generated by $C_0$ and $f$  satisfying $C_0^2=-e$, $C_0\cdot f=1$ and $f^2=0$.
If $\fb$ %or $\mathfrak{b}$
is a divisor on $C$, we will write $\mathfrak{b}f$ instead of $\pi^{*}\mathfrak{b}$. Thus a divisor $D$ on $X$ can be written uniquely as $D= aC_0+\mathfrak{b}f$, being $\fb\in \Pic(C)$, and its element in $\Num(X)$ can be written as $D\equiv aC_0+bf$ with $b=\deg(\mathfrak{b})$.  
The canonical divisor $K_X$ on $X$ is in the class $-2C_0+(\mathfrak{k}+\mathfrak{e})f$, where $\mathfrak{k}$ is the canonical divisor on $C$  of degree $2g-2$.

\begin{notation}

By abuse of notation, we will denote the tensor product $\mathcal{O}_X(aC_0+\fb f)\otimes\mathcal{O}_X(f)$ by $\mathcal{O}_X(aC_0+(\fb+1)f)$ and we will denote the canonical divisor by $K_X=-2C_0-\overline{e}f$ with $\overline{e}:=e-2(g-1)$.
\rm Usually, we will write $\h^0\mathcal{O}_X(D)$ to refer to $\h^0(X,\mathcal{O}_X(D))$.
\end{notation}

For any divisor $D=aC_0+\mathfrak{b} f$ on $X$ with $a\geq0$, it follows from  Lemma V.2.4, Exercises III.8.3 and
III.8.4 of  \cite{hartshorne}, that 
\begin{equation}
\label{igualdadcohom}
    \h^i(X,\mathcal{O}_X(D))=\h^i(C, (S^a\mathcal{E})(\mathfrak{b})),
\end{equation}
 where $S^a\mathcal{E}$ stands for the $a$-th symmetric power of $\mathcal{E}$.

Moreover,
\begin{equation}
    \label{desigcoh}
\h^0(C,\mathcal{O}_C(\mathfrak{b}))\leq \h^0(C, (S^a\mathcal{E})(\mathfrak{b}))\leq \sum_{i=0}^a \h^0(C,\mathcal{O}_C(\mathfrak{b}+i\fe)),  
\end{equation}
for each divisor $\mathfrak{b}$ on $C$ (see for instance \cite[Section 2]{theta}).

\vspace{0.4cm}

For effective divisors on $X$ we have the following 
(see \cite[Lemma 2.2]{nuestro}). 

\begin{lemma}
\label{lema}
Let $X$ be a ruled surface over a  nonsingular curve $C$ of genus $g\geq0$ and $D=aC_0+\mathfrak{b}f$ be a divisor on $X$ with $b:=\deg(\fb)$. If $D$ is effective then $a,b\geq0$.
\end{lemma}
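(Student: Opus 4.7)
The plan is to prove the two inequalities $a\geq 0$ and $b\geq 0$ separately using intersection theory on $X$, exploiting that both generators $f$ and $C_0$ of $\Num(X)$ are represented by effective curves whose self-intersection behavior is under control.

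First, to get $a\geq 0$ I would intersect $D$ with a fiber. Since all fibers of $\pi$ are numerically equivalent and any two distinct fibers are disjoint, I can choose a fiber $f_0\not\subseteq \Supp(D)$, so that $D\cdot f_0\geq 0$. Computing numerically with $C_0\cdot f=1$ and $f^2=0$ gives $D\cdot f = a$, hence $a\geq 0$.

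Second, to get $b\geq 0$ I would peel off the $C_0$-component of $D$ to make an intersection with $C_0$ legitimate. Let $m\geq 0$ be the multiplicity of $C_0$ as a component of $D$ and write $D = mC_0 + D'$ with $D'$ effective and $C_0\not\subseteq\Supp(D')$. Numerically, $D'\equiv (a-m)C_0 + bf$. Applying the fiber argument of the previous step to $D'$ yields $a-m\geq 0$. Since $C_0$ is not a component of $D'$, the intersection $D'\cdot C_0$ is non-negative, and computing with $C_0^2 = -e$ gives $D'\cdot C_0 = -(a-m)e + b\geq 0$. Using $e\geq 0$ and $a-m\geq 0$, this gives $b\geq (a-m)e \geq 0$.

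The only real subtlety is the second step: a naive intersection $D\cdot C_0$ can be negative when $C_0$ is itself a component of $D$ (since $C_0^2=-e\leq 0$), so the argument must first strip off the multiplicity of $C_0$ before extracting a positivity statement from $D'\cdot C_0$. Once this is done, the conclusion is immediate from $e\geq 0$.
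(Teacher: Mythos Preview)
Your argument is correct: intersecting with a general fiber gives $a\geq 0$, and after peeling off the $C_0$-component the inequality $D'\cdot C_0\geq 0$ together with $e\geq 0$ and $a-m\geq 0$ yields $b\geq (a-m)e\geq 0$. The paper does not actually supply its own proof of this lemma; it merely quotes \cite[Lemma~2.2]{nuestro}, so there is nothing to compare against here beyond noting that your intersection-theoretic approach is the standard one and relies precisely on the standing hypothesis $e\geq 0$.
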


\vspace{0.2cm}
The following is a well-known result.
\begin{proposition}
\label{zgen}
    Let $X$ be a smooth projective surface, $D$ a divisor on $X$ and $Z\subset X$  a generic 0-dimensional subscheme. If $|Z|\geq \h^0\mathcal{O}_X(D)$ then $\h^0I_Z(D)=0$.
\end{proposition}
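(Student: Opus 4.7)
The plan is to reduce the vanishing $\h^0 I_Z(D)=0$ to the injectivity of an evaluation map, and then prove this injectivity by adding one general point at a time. Writing $n:=\h^0\mathcal{O}_X(D)$, I would begin with the standard ideal sheaf sequence
\begin{equation*}
0\longrightarrow I_Z(D)\longrightarrow \mathcal{O}_X(D)\longrightarrow \mathcal{O}_Z(D)\longrightarrow 0,
\end{equation*}
whose induced long exact sequence shows that $\h^0 I_Z(D)=0$ is equivalent to the injectivity of the evaluation map $\mathrm{ev}_Z\colon H^0(\mathcal{O}_X(D))\to H^0(\mathcal{O}_Z(D))$.

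Since $Z$ is generic, it is enough to treat the case of $n$ distinct points $p_1,\dots,p_n$ (if $|Z|>n$ one restricts the map through a generic subscheme of length $n$, and the reduced case gives the general case of the Hilbert scheme). I would then argue by induction on $k\in\{0,1,\dots,n\}$ that for general points $p_1,\dots,p_k\in X$, the linear subspace $V_k\subseteq H^0(\mathcal{O}_X(D))$ of sections vanishing at all $p_i$ has dimension exactly $n-k$. The base case $k=0$ is trivial. For the inductive step, note that if $n-k>0$, then $V_k$ is a nonzero linear system; any nonzero section $s\in V_k$ has vanishing locus a proper divisor, and hence the base locus $\mathrm{Bs}(V_k)=\bigcap_{s\in V_k\setminus\{0\}} Z(s)$ is a proper closed subscheme of $X$. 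A general point $p_{k+1}$ therefore lies outside $\mathrm{Bs}(V_k)$, which forces the codimension of $V_{k+1}$ in $V_k$ to be exactly $1$, giving $\dim V_{k+1}=n-k-1$.

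After $n$ steps we obtain $V_n=0$, i.e.\ $\mathrm{ev}_Z$ is injective for this particular configuration $Z=\{p_1,\dots,p_n\}$. To pass from existence to genericity, I would invoke upper semicontinuity of $Z\mapsto \h^0 I_Z(D)$ on the Hilbert scheme of $n$ points on $X$: the locus where $\h^0 I_Z(D)=0$ is open, and by the previous step it is nonempty, hence generic. The case $|Z|>n$ follows because $\h^0 I_Z(D)\leq \h^0 I_{Z'}(D)$ for any $Z'\subseteq Z$ of length $n$.

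The only delicate step is the inductive one, and even there the main point is simply that a nonzero linear system on a variety has proper base locus; there are no serious obstacles, the argument being essentially formal once the evaluation map is identified.
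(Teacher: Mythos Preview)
Your argument is correct and is the standard one for this well-known fact. The paper itself does not supply a proof: it introduces the proposition with ``The following is a well-known result'' and moves on immediately, so there is no paper proof to compare against. Your reduction via the ideal sheaf sequence to injectivity of the evaluation map, the inductive ``a general point outside the base locus drops the dimension by one'' step, and the upper-semicontinuity argument on the Hilbert scheme together give exactly the expected justification.
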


Given a vector bundle $E$ on $X$ of rank $r$ and fixed Chern Classes $c_i:=c_i(E)\in \Ho^{2i}(X,\mathbb{Z})$, the Riemann-Roch Theorem states that the Euler characteristic of $E$  can be expressed as 
     $$\chi(r;c_1,c_2)=r(1-g)-\frac{c_1K_X}{2}+\frac{c_1^2}{2}-c_2.$$

Prioritary sheaves were introduced on $\mathbb{P}^2$ in the early nineties by Hirschowitz and Laszlo in \cite{Hirschowitz} as a generalization of semistable sheaves. Later on, Walter introduced in \cite{Walter} the notion of prioritary sheaf in a much general context. Let us recall the definition.

\begin{definition} \label{prioritary_def}

If $\pi:X\rightarrow C$ is a  ruled surface and $E$ is a coherent sheaf on $X$, we say that $E$ is prioritary if $\Ext^2(E,E(-f))=0$.
\end{definition}

Since the work of Walter (\cite{Walter}), prioritary sheaves have become a very important class of sheaves, in particular for their relation with the stable ones. In fact, it is well-known that, given $H$ an ample divisor, if $(K_X+f)\cdot H<0$, then $H-$stable sheaves are prioritary. Moreover, there exists an open immersion from the moduli space $M_H(r;c_1,c_2)$ parametrizing rank $r$ $H$-stable vector bundles $E$ on $X$ with $c_1(E)=c_1$ and $c_2(E)=c_2$ to the stack of prioritary sheaves $\Prior_X(r;c_1,c_2)$ and in particular, both spaces have the same expected dimension. 

Ideal sheaves of 0-dimensional schemes are examples of prioritary coherent sheaves. In fact,
\begin{lemma}
\label{lema_IZ}
    Le $X$ be a ruled surface over a non singular curve $C$ of genus $g\geq0$ and let $Z$ be a $0$-dimensional scheme on $X$ of length $|Z|\geq0$ and $D\in\Pic(X)$. Then, $I_Z(D)$ is simple and prioritary. In particular, line bundles are simple and prioritary.
\end{lemma}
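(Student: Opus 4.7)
The plan is to handle simplicity and the prioritary vanishing separately, in both cases reducing statements about $I_Z(D)$ to $\Hom$ computations involving $\mathcal{O}_X$ and exploiting that $Z$ has codimension two in the smooth surface $X$.

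For simplicity I would first show $\mathcal{H}om(I_Z,\mathcal{O}_X)\cong\mathcal{O}_X$. Applying $\mathcal{H}om(-,\mathcal{O}_X)$ to the defining sequence $0\to I_Z\to\mathcal{O}_X\to\mathcal{O}_Z\to 0$ yields this, since $\mathcal{H}om(\mathcal{O}_Z,\mathcal{O}_X)=0$ (a torsion sheaf admits no maps to a torsion-free one) and $\mathcal{E}xt^1(\mathcal{O}_Z,\mathcal{O}_X)=0$ because $Z$ has codimension two in the smooth surface $X$. Twisting by $\mathcal{O}_X(D)$ and taking global sections then gives $\Hom(I_Z(D),\mathcal{O}_X(D))\cong H^0(\mathcal{O}_X)\cong K$, spanned by the canonical inclusion $\iota\colon I_Z(D)\hookrightarrow\mathcal{O}_X(D)$. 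For any $\phi\in\Hom(I_Z(D),I_Z(D))$ the composition $\iota\circ\phi$ must then equal $\lambda\cdot\iota$ for some $\lambda\in K$, and the injectivity of $\iota$ forces $\phi=\lambda\cdot\mathrm{id}$.

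For the prioritary property I would apply Serre duality on the smooth projective surface $X$ to rewrite
\[
\Ext^2(I_Z(D),I_Z(D)(-f))\cong\Hom(I_Z(D-f),I_Z(D+K_X))^\ast,
\]
and twisting by $\mathcal{O}_X(-D)$ reduces matters to showing $\Hom(I_Z(-f),I_Z(K_X))=0$. Composing with the inclusion $I_Z(K_X)\hookrightarrow\mathcal{O}_X(K_X)$ embeds this group into
\[
\Hom(I_Z(-f),\mathcal{O}_X(K_X))\cong H^0(\mathcal{O}_X(K_X+f)),
\]
again via $\mathcal{H}om(I_Z,\mathcal{O}_X)\cong\mathcal{O}_X$. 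With the convention from the beginning of the section, $K_X+f\equiv -2C_0+(1-\overline{e})f$ has coefficient $a=-2<0$, so no divisor linearly equivalent to $K_X+f$ can be effective by Lemma~\ref{lema}, and therefore $H^0(\mathcal{O}_X(K_X+f))=0$.

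The main technical point is the identification $\mathcal{H}om(I_Z,\mathcal{O}_X)\cong\mathcal{O}_X$, which is exactly where the hypothesis that $Z$ has codimension two in the smooth surface is used; once it is in hand, both simplicity and the prioritary vanishing reduce to one-line $\Hom$ computations with line bundles. The in-particular statement for line bundles is the special case $Z=\emptyset$.
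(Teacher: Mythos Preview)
Your proof is correct and reaches the same two key facts the paper uses, namely $\Hom(I_Z,\mathcal{O}_X)\cong K$ and $\h^0\mathcal{O}_X(K_X+f)=0$, but you organise the argument a little differently. The paper works entirely with the defining exact sequence $0\to I_Z(D)\to\mathcal{O}_X(D)\to\mathcal{O}_Z(D)\to 0$: for simplicity it applies $\Hom(I_Z(D),-)$ and identifies $\Hom(I_Z(D),\mathcal{O}_X(D))\cong\Ho^2 I_Z(K_X)\cong K$ via Serre duality; for the prioritary vanishing it applies $\Hom(-,I_Z(D-f))$ and kills $\Ext^2(I_Z(D),I_Z(D-f))$ by the surjection from $\Ext^2(\mathcal{O}_X(D),I_Z(D-f))\cong\Ho^2 I_Z(-f)\cong\Ho^0\mathcal{O}_X(K_X+f)^{*}=0$. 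You instead isolate the sheaf-level identity $\mathcal{H}om(I_Z,\mathcal{O}_X)\cong\mathcal{O}_X$ (using $\mathcal{E}xt^1(\mathcal{O}_Z,\mathcal{O}_X)=0$ in codimension two) and, for the prioritary part, dualise first to $\Hom(I_Z(-f),I_Z(K_X))$ and then embed into $\Hom(I_Z(-f),\mathcal{O}_X(K_X))\cong\Ho^0\mathcal{O}_X(K_X+f)$. Your route makes explicit why the codimension-two hypothesis matters and avoids computing any $\Ho^2$ directly; the paper's route is slightly more uniform in that both halves are handled by the same exact-sequence manoeuvre. Either way the substance is the same.
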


\begin{proof}
    Let us consider the exact sequence
    \begin{equation}
    \label{IZ}
        0\rightarrow I_Z(D)\rightarrow \mathcal{O}_X(D)\rightarrow \mathcal{O}_Z(D)\rightarrow0.
    \end{equation}

    %\textbf{Claim: $I_Z(D)$ is simple}
    First of all we will see that $I_Z(D)$ is simple.
    Applying the functor $\Hom(I_Z(D),-)$ to the exact sequence (\ref{IZ}), we obtain the long exact sequence
    \begin{equation}
    \label{IZ_simple}
0\rightarrow\Hom(I_Z(D),I_Z(D))\rightarrow\Hom(I_Z(D),\mathcal{O}_X(D))\rightarrow\Hom(I_Z(D),\mathcal{O}_Z(D))\rightarrow\cdots
    \end{equation}

    Notice that 
  \[\Hom(I_Z(D),\mathcal{O}_X(D)) \cong  \Ho^2I_Z(K_X) \quad\mbox{and} \quad\Ho^2I_Z(K_X)    \cong K.\]

Hence, by (\ref{IZ_simple}) $\Hom(I_Z(D),I_Z(D))\cong K$ and thus $I_Z(D)$ is simple.

\vspace{0.2cm}
    %\textbf{Claim: $I_Z(D)$ is prioritary}
    Now we are going to see that $I_Z(D)$ is prioritary.
    Applying the functor $\Hom(-,I_Z(D-f))$ to (\ref{IZ}), we get the long exact sequence
    \begin{equation}
        \cdots\rightarrow\Ext^2(\mathcal{O}_Z(D),I_Z(D-f))\rightarrow\Ext^2(\mathcal{O}_X(D),I_Z(D-f))\rightarrow\Ext^2(I_Z(D),I_Z(D-f))\rightarrow0
    \end{equation}

    Since $$\Ext^2(\mathcal{O}_X(D),I_Z(D-f))\cong \Ho^2I_Z(-f)\cong \Ho^0\mathcal{O}_X(K_X+f)=0,$$
   we get $\Ext^2(I_Z(D),I_Z(D-f))=0$ and hence $I_Z(D)$ is also prioritary.
    
\end{proof}

\begin{remark}
    Since, for any ample divisor $H$ on $X$ with $(K_X+f)\cdot H<0$, $H$-stable vector bundles are prioritary, we have several examples of rank two simple prioritary vector bundles with sections (see for instance \cite{nuestro}).
\end{remark}

In spite of that, recently, prioritary sheaves have been used to get nice contributions in different problems, there are a lot of questions concerning them still open. In this paper we will focus on their existence and on finding lower bounds for its number of independent sections. To this end, let us finish the section with the following key result that we will apply in subsequent sections.

\begin{theorem}
\label{thprior1}
    Let $X$ be a ruled surface over a nonsingular curve $C$ of genus $g\geq0$. Let  $\mathcal{F}_r$ be the family of rank r vector bundles $E_r$ on $X$ given by a nontrivial extension of type
    \begin{equation}
    \label{eqE3}
 0\rightarrow E_{r-1}\rightarrow E_r\rightarrow \mathcal{O}_X(L)\rightarrow 0    
\end{equation}
  where $L\in \Pic(X)$ and $E_{r-1}$ is a rank $r-1$ vector bundle on $X$ such that $\h^1E_{r-1}(-L)\neq0$.
  \begin{itemize}
  \item [(a)]  If $E_{r-1}$ is a simple vector bundle with $\h^0E_{r-1}(-L)=0$ and $\h^2E_{r-1}(K_X-L)=0$, then $E_{r}$ is simple. 
      \item[(b)]  If $E_{r-1}$ is a prioritary vector bundle with $\h^0E_{r-1}(K_X-L+f)=0$ and $\h^2E_{r-1}(-f-L)=0$, then $E_r$ is prioritary. %Moreover, $\h^0E_{r}(K_X-L+f)=0$ and $\h^2E_{r}(-f-L)=0$.
      
  \end{itemize}
  
\end{theorem}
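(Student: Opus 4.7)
The plan is to apply Hom long-exact sequences to the defining extension (\ref{eqE3}) and translate all hypotheses into $\Ext$-group vanishings via Serre duality on the surface $X$. Throughout I will use the dimensional identity $\dim \Ext^i(F, G) = \dim \Ext^{2-i}(G, F \otimes \mathcal{O}_X(K_X))$.

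For part (a), I would first establish $\Hom(E_r, E_{r-1}) = 0$. Applying $\Hom(-, E_{r-1})$ to (\ref{eqE3}) and using $\Hom(\mathcal{O}_X(L), E_{r-1}) = \h^0 E_{r-1}(-L) = 0$, one obtains an injection $\Hom(E_r, E_{r-1}) \hookrightarrow \Hom(E_{r-1}, E_{r-1})$ whose image is the kernel of the connecting map $\delta \colon \Hom(E_{r-1}, E_{r-1}) \to \Ext^1(\mathcal{O}_X(L), E_{r-1}) = \h^1 E_{r-1}(-L)$. The crucial observation is that $\delta(\mathrm{id}_{E_{r-1}})$ coincides with the extension class of (\ref{eqE3}), which is nonzero, so by simplicity of $E_{r-1}$ the map $\delta$ is injective on the one-dimensional space $\Hom(E_{r-1}, E_{r-1})$, forcing $\Hom(E_r, E_{r-1}) = 0$. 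Next, applying $\Hom(-, \mathcal{O}_X(L))$ to (\ref{eqE3}), Serre duality and the second hypothesis give $\dim \Hom(E_{r-1}, \mathcal{O}_X(L)) = \h^2 E_{r-1}(K_X - L) = 0$, so $\Hom(E_r, \mathcal{O}_X(L)) \cong \Hom(\mathcal{O}_X(L), \mathcal{O}_X(L)) = K$. Applying $\Hom(E_r, -)$ to (\ref{eqE3}) then yields $0 \to \Hom(E_r, E_r) \to K$, whence $\dim \Hom(E_r, E_r) \le 1$; since $\mathrm{id}_{E_r}$ is nonzero, $E_r$ is simple.

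For part (b), applying $\Hom(-, E_r(-f))$ to (\ref{eqE3}) reduces the problem to showing vanishing of the two flanking terms $\Ext^2(\mathcal{O}_X(L), E_r(-f))$ and $\Ext^2(E_{r-1}, E_r(-f))$, which I would obtain by applying $\Hom(\mathcal{O}_X(L), -)$ and $\Hom(E_{r-1}, -)$, respectively, to the $\mathcal{O}_X(-f)$-twist of (\ref{eqE3}). The four ingredients are: $\Ext^2(\mathcal{O}_X(L), E_{r-1}(-f)) = \h^2 E_{r-1}(-L-f) = 0$ and $\dim \Ext^2(E_{r-1}, \mathcal{O}_X(L-f)) = \h^0 E_{r-1}(K_X - L + f) = 0$, both directly from the hypotheses of (b); $\Ext^2(E_{r-1}, E_{r-1}(-f)) = 0$, which is exactly that $E_{r-1}$ is prioritary; and $\Ext^2(\mathcal{O}_X(L), \mathcal{O}_X(L-f)) = \h^2 \mathcal{O}_X(-f)$, which is Serre-dual to $\h^0 \mathcal{O}_X(K_X + f)$, where $K_X + f \equiv -2 C_0 + (1-\overline{e})f$ has negative $C_0$-coefficient and hence is not effective by Lemma \ref{lema}. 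The only genuinely non-mechanical ingredient is the use of the connecting map in part (a) to convert non-triviality of the extension into the vanishing of $\Hom(E_r, E_{r-1})$; everything else is a routine diagram chase modulo Serre duality.
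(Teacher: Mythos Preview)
Your proof is correct and uses essentially the same ingredients as the paper: long exact $\Hom$-sequences on (\ref{eqE3}), Serre duality to translate the hypotheses into $\Ext$-vanishings, and the identification of the connecting map at the identity with the (nonzero) extension class. The only cosmetic difference is in part~(a): the paper applies $\Hom(-,E_r)$ and bounds $\Hom(E_r,E_r)$ between $\Hom(\mathcal{O}_X(L),E_r)=0$ and $\Hom(E_{r-1},E_r)\cong K$, whereas you apply $\Hom(E_r,-)$ and bound it between $\Hom(E_r,E_{r-1})=0$ and $\Hom(E_r,\mathcal{O}_X(L))\cong K$; part~(b) is handled identically.
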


    \begin{proof}

(a) Assume that $E_{r-1}$ is a simple vector bundle with $\h^0E_{r-1}(-L)=0$ and $\h^2E_{r-1}(K_X-L)=0$.
Applying the functor $\Hom(-,E_r)$ to the exact sequence
(\ref{eqE3}),
we obtain
\begin{equation}
    \label{eqsimple}
0\rightarrow \Hom(\mathcal{O}_X(L),E_r)\rightarrow \Hom(E_r,E_r)\rightarrow \Hom(E_{r-1},E_r)\rightarrow\Ext^1(\mathcal{O}_X(L),E_r)\rightarrow\cdots
\end{equation}

Let us first compute $\Hom(\mathcal{O}_X(L),E_r)\cong \Ho^0E_r(-L)$.
If we twist the exact sequence (\ref{eqE3}) by $\mathcal{O}_X(-L)$ and we take cohomology, we obtain $$0\rightarrow \Ho^0E_{r-1}(-L)\rightarrow \Ho^0E_r(-L)\rightarrow \Ho^0\mathcal{O}_X\overset{\alpha}{\rightarrow} \Ho^1E_{r-1}(-L)\rightarrow\cdots$$

Since $\Ho^0E_{r-1}(-L)=0$,  we have   $\Ho^0E_r(-L)\cong \ker(\alpha)$. Notice that the map $\alpha$ can be indentified with the map $$K\rightarrow\Ext^1(\mathcal{O}_X(L),E_{r-1})$$

\noindent which sends $1\in K$ to the nontrivial extension (\ref{eqE3}). Thus, $\alpha$  is an injection and $\Ho^0E_{r}(-L)\cong\ker(\alpha)=0$.

Let us now compute $\Hom(E_{r-1},E_r)$. Applying the functor $\Hom(E_{r-1},-)$ to the exact sequence (\ref{eqE3}), we obtain
$$0\rightarrow \Hom(E_{r-1},E_{r-1})\rightarrow \Hom(E_{r-1}, E_r)\rightarrow \Hom(E_{r-1}, \mathcal{O}_X(L) ) \rightarrow \cdots$$
Since $ \Hom(E_{r-1}, \mathcal{O}_X(L) )\cong \Ext^2(\mathcal{O}_X(L), E_{r-1}(K_X))\cong \Ho^2E_{r-1}(K_X-L)=0$ and $E_{r-1}$ is simple, we obtain $\Hom(E_{r-1}, E_r)\cong \Hom(E_{r-1},E_{r-1}) \cong K$.

Putting altogether, by (\ref{eqsimple}) we get that $1\leq \dim\Hom(E_r, E_r)\leq1$ and hence $E_r$ is simple.

(b) Assume that $E_{r-1}$ is a prioritary vector bundle with $\h^0E_{r-1}(K_X-L+f)=0$ and $\h^2E_{r-1}(-f-L)=0$.
%\textbf{Claim 2: $E_r$ is prioritary}

Applying the functor $\Hom(-,E_r(-f))$ to (\ref{eqE3}), we get
\begin{equation}
\label{eqprior}
    \cdots\rightarrow \Ext^2(\mathcal{O}_X(L),E_r(-f))\rightarrow \Ext^2(E_r,E_r(-f))\rightarrow \Ext^2(E_{r-1},E_r(-f))\rightarrow0
\end{equation}

Let us first compute $\Ext^2(\mathcal{O}_X(L),E_r(-f))\cong \Ho^2E_r(-f-L)$.
If we consider the extension (\ref{eqE3}) and we take cohomology, we get  
$$\cdots \rightarrow 
\Ho^2E_{r-1}(-f-L)\rightarrow \Ho^2E_r(-f-L)\rightarrow \Ho^2\mathcal{O}_X(-f)\rightarrow0.$$

Since $ \Ho^2E_{r-1}(-f-L)=0$ and $\Ho^2\mathcal{O}_X(-f)\cong \Ho^0\mathcal{O}_X(K_X+f)=0$, we conclude that 
$\Ho^2E_r(-f-L)=0$.

Now we are going to compute $\Ext^2(E_{r-1}, E_{r}(-f))$. Applying the functor $\Hom(E_{r-1},-)$ to the exact sequence (\ref{eqE3}) twisted by $\mathcal{O}_X(-f)$, we get
$$\cdots \Ext^2(E_{r-1},E_{r-1}(-f))\rightarrow \Ext^2(E_{r-1}, E_r(-f))\rightarrow \Ext^2(E_{r-1},\mathcal{O}_X(L-f))\rightarrow0.$$

Since $E_{r-1}$ is prioritary and $\Ext^2(E_{r-1},\mathcal{O}_X(L-f))\cong \Ho^0E_{r-1}(K_X-L+f)=0$, we get  $ \Ext^2(E_{r-1}, E_r(-f))=0$.

Putting altogether, by (\ref{eqprior}) we get that $E_r$ is prioritary.
    \end{proof}

\section{Rank 3 prioritary bundles}

In the above section we have seen examples of rank 1 and 2 simple prioritary vector bundles with sections.
The main goal of this section is to prove the existence of simple prioritary rank 3 vector bundles on $X$ and to find lower bounds for the dimension of their space of sections.

\vspace{0.2cm}
Notice that $E$ is a simple prioritary vector bundle if and only if, for any divisor $D$ on $X$, $E(D)$ is simple and prioritary. Hence in this section we normalize the first Chern class of our rank $3$ bundle $E$ so that $c_1(E)=s C_0+\mathfrak{t}f$ with $s=0,1,2$.
We will start with the case $c_1=2C_0+\mathfrak{t}f$ and to this end, among others, we will apply  Theorem \ref{thprior1} to construct rank-3 simple prioritary vector bundles with this first Chern class.

\vspace{0.2cm}
First of all let us prove the following technical Lemma.

\begin{lemma}
\label{lema_E2'}
    Let $X$ be a ruled surface over a nonsingular curve $C$ of genus $g\geq0$.  Let us consider $\mathcal{S}_2$ the family of rank-2 vector bundles given by a non trivial extension of type 
    \begin{equation}
        \label{Frecursiva3}
        0\rightarrow \mathcal{O}_X(\fb f)\rightarrow E_2\rightarrow I_Z(C_0+(\fm-\fb) f)\rightarrow0,
    \end{equation}
    where $\fm$ and $\fb$ are divisors on $C$ of degree $m=\deg(\fm )\in\{0,1\}$ and $b=\deg(\fb)\geq1$ and %:=k-1+g$.% with $k$ in range $$\max\{1,g\}\leq k<\frac{1}{2}[\beta-\alpha(e-m+2g-2)],$$ and 
     $Z$ is a generic 0-dimensional subscheme of length $|Z|\geq1$. Let $L$ be a divisor on $X$.
    The following holds: \begin{itemize}
        \item[i)] If $-L+\fb f$ is a non effective divisor and $|Z|>\h^0\mathcal{O}_X(-L+C_0+(\fm-\fb)f)$, then $\h^0E_2(-L)=0$.
         %\item[ii)] If $K_X+f-L+\fb f$ is a non effective divisor and $|Z|>\h^0\mathcal{O}_X(K_X+f-L+C_0+(\fm-\fb)f)$, then $\h^0E_2(K_X+f-L)=0$.
           \item[ii)] If $L-\fb f$ and $L-C_0-(\fm-\fb)f$ are non effective divisors, then $\h^2E_2(K_X-L)=0$.
          %\item[iv)] If $K_X+f+L-\fb f$ and $K_X+f+L-C_0-(\fm-\fb)f$ are non effective divisors, then $\h^2E_2(-f-L)=0$.
          \item[iii)]  $\ext^1(I_Z(C_0+(\fm-\fb)f),\mathcal{O}_X(\fb f))>0$.
          \item[iv)] $E_2$ is a simple prioritary vector bundle.
         \end{itemize}
\end{lemma}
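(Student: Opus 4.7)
The plan is to derive all four claims from direct cohomological computation on the defining sequence (\ref{Frecursiva3}) and its twists. The recurring mechanism is that a line bundle $\mathcal{O}_X(aC_0+\fb f)$ with $a<0$ or $\deg\fb<0$ has $\h^0=0$ by Lemma \ref{lema}, and Serre duality converts every $\h^2$-vanishing we need for a line bundle piece into an $\h^0$-vanishing of this shape.

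For (i), I twist (\ref{Frecursiva3}) by $\mathcal{O}_X(-L)$ and take cohomology: the contribution of the subobject is $\h^0\mathcal{O}_X(-L+\fb f)=0$ by the non-effectiveness hypothesis, and the contribution of the quotient $\h^0 I_Z(-L+C_0+(\fm-\fb)f)=0$ by Proposition \ref{zgen} together with the bound on $|Z|$. For (ii), I twist (\ref{Frecursiva3}) by $\mathcal{O}_X(K_X-L)$ and take $\h^2$; since $Z$ is zero-dimensional one has $\h^2 I_Z(D)=\h^2\mathcal{O}_X(D)$ for any $D$, and Serre duality identifies the two outer terms with the duals of $\h^0\mathcal{O}_X(L-\fb f)$ and $\h^0\mathcal{O}_X(L-C_0-(\fm-\fb)f)$, both zero by hypothesis. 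For (iii), Serre duality gives
$$\Ext^1(I_Z(C_0+(\fm-\fb)f),\mathcal{O}_X(\fb f))^{\vee}\cong \Ho^1 I_Z(-C_0+(\fm-2\fb-\overline{e})f),$$
and the long exact sequence obtained from $0\to I_Z\to\mathcal{O}_X\to\mathcal{O}_Z\to 0$ twisted by $\mathcal{O}_X(-C_0+(\fm-2\fb-\overline{e})f)$ yields $\h^1 I_Z(-C_0+(\fm-2\fb-\overline{e})f)\geq|Z|\geq 1$, because the corresponding $\h^0\mathcal{O}_X$ vanishes by Lemma \ref{lema} (negative $C_0$-coefficient).

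For (iv), the simpleness argument has two stages. Applying $\Hom(-,\mathcal{O}_X(\fb f))$ to (\ref{Frecursiva3}) gives $\Hom(I_Z(C_0+(\fm-\fb)f),\mathcal{O}_X(\fb f))=\h^0\mathcal{O}_X(-C_0+(2\fb-\fm)f)=0$ by Lemma \ref{lema}, while the coboundary $K\cong\Hom(\mathcal{O}_X(\fb f),\mathcal{O}_X(\fb f))\to\Ext^1(I_Z(C_0+(\fm-\fb)f),\mathcal{O}_X(\fb f))$ sends $1$ to the nontrivial extension class of (\ref{Frecursiva3}) and is therefore injective, forcing $\Hom(E_2,\mathcal{O}_X(\fb f))=0$. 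Analogously, applying $\Hom(-,I_Z(C_0+(\fm-\fb)f))$ to (\ref{Frecursiva3}) and combining with simpleness of $I_Z(C_0+(\fm-\fb)f)$ (Lemma \ref{lema_IZ}) together with $\h^0\mathcal{O}_X(C_0+(\fm-2\fb)f)=0$ (negative $f$-coefficient, since $b\geq 1$) yields $\Hom(E_2,I_Z(C_0+(\fm-\fb)f))\hookrightarrow K$. Feeding both into the $\Hom(E_2,-)$ sequence of (\ref{Frecursiva3}) forces $\Hom(E_2,E_2)\hookrightarrow K$, hence $E_2$ is simple. For prioritariness, I apply $\Hom(-,E_2(-f))$ to (\ref{Frecursiva3}) and reduce to showing the two outer $\Ext^2$-terms vanish: $\Ext^2(\mathcal{O}_X(\fb f),E_2(-f))\cong\Ho^2 E_2(-\fb f-f)$, and via the ideal-sheaf sequence $\Ext^2(I_Z(C_0+(\fm-\fb)f),E_2(-f))$ is a quotient of $\Ho^2 E_2(-C_0-(\fm-\fb)f-f)$. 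In each case, twisting (\ref{Frecursiva3}) accordingly and applying Serre duality reduces the task to $\h^0$-vanishing of line bundles with negative $C_0$-coefficient, again covered by Lemma \ref{lema}.

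The main obstacle is the bookkeeping in the prioritary half of (iv), which requires chasing several nested long exact sequences; however, every terminal vanishing reduces to Lemma \ref{lema} applied to a divisor with a negative $C_0$- or $f$-coefficient, so there is no conceptually hard step.
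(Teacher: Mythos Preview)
Your argument is correct. Parts (i)--(iii) match the paper's proof essentially verbatim. For (iv) you and the paper both reduce everything to line-bundle vanishings via Lemma~\ref{lema}, but with mirror-image decompositions in the simpleness half: the paper applies $\Hom(-,E_2)$ to (\ref{Frecursiva3}) and shows $\Hom(I_Z(C_0+(\fm-\fb)f),E_2)=0$ together with $\Hom(\mathcal{O}_X(\fb f),E_2)\cong K$, whereas you apply $\Hom(E_2,-)$ and show $\Hom(E_2,\mathcal{O}_X(\fb f))=0$ together with $\Hom(E_2,I_Z(C_0+(\fm-\fb)f))\cong K$ (your ``$\hookrightarrow K$'' is in fact an isomorphism, since the third term $\Ho^0 I_Z(C_0+(\fm-2\fb)f)$ already vanishes). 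Both routes hinge on the same injectivity-of-the-coboundary observation for nontrivial extensions. For prioritariness, your reduction of $\Ext^2(I_Z(C_0+(\fm-\fb)f),E_2(-f))$ via the ideal-sheaf sequence $0\to I_Z\to\mathcal{O}_X\to\mathcal{O}_Z\to 0$ is slightly more economical than the paper, which instead applies $\Hom(I_Z(C_0+(\fm-\fb)f),-)$ to the twisted defining sequence and invokes Lemma~\ref{lema_IZ}; the terminal vanishings coincide.
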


\begin{proof}     Using the exact sequence (\ref{Frecursiva3}) and Proposition \ref{zgen} we directly prove i). The proof of ii) follows using the exact sequence (\ref{Frecursiva3}) and Serre duality.

\noindent iii) Notice that 

$\begin{array}{ll}
\ext^1(I_Z(C_0+(\fm-\fb)f),\mathcal{O}_X(\fb f)) &  \geq -\chi I_Z(C_0+(\fm-2\fb)f+K_X)\\
     & =|Z|-\chi\mathcal{O}_X(-C_0-(\fm-2\fb)f).  
\end{array}$

Since by the Riemann-Roch theorem $\chi\mathcal{O}_X(-C_0-(\fm-2\fb)f)=0$, we get $$\ext^1(I_Z(C_0+(\fm-\fb)f),\mathcal{O}_X(\fb f))\geq |Z|>0.$$

\noindent iv) Since the couple $(\mathcal{O}_X(C_0+(\fm-2\fb)f+K_X), Z)$ satisfies the Cayley-Bacharach property,  $E_2$ is a rank $2$  vector bundle.
Let us now check that $E_2$ is simple. Applying the functor $\Hom(-,E_2)$ to the exact sequence (\ref{Frecursiva3}), we get $$0\rightarrow\Hom(I_Z(C_0+(\fm-\fb)f),E_2)\rightarrow \Hom(E_2,E_2)\rightarrow\Hom(\mathcal{O}_X(\fb f),E_2)\rightarrow\cdots$$

\noindent\textbf{Claim 1: $\Hom(\mathcal{O}_X(\fb f), E_2)\cong\Ho^0 E_2(-\fb f)\cong K$.} 

\noindent\textbf{Proof of Claim 1:}
%Let us now prove that $E_2$ is simple.
Twisting the exact sequence (\ref{Frecursiva3}) by $\mathcal{O}_X(-\fb f)$ and taking cohomology, we get $$0\rightarrow \Ho^0\mathcal{O}_X\rightarrow \Ho^0 E_2(-\fb f)\rightarrow\Ho^0 I_Z(C_0+(\fm-2\fb)f)\rightarrow\cdots$$
Since $b\geq1$ and $m\in\{0,1\}$,  $\Ho^0I_Z(C_0+(\fm-2\fb)f)=0$ and hence $\Ho^0 E_2(-\fb f)\cong \Ho^0\mathcal{O}_X\cong K.$

\noindent\textbf{Claim 2:  $\Hom(I_Z(C_0+(\fm-\fb )f), E_2)=0$.}

\noindent\textbf{Proof of Claim 2:}
Applying the functor   $\Hom(I_Z(C_0+(\fm-\fb )f), -)$ to the exact sequence (\ref{Frecursiva3}) we get $$0\rightarrow \Hom(I_Z(C_0+(\fm-\fb )f), \mathcal{O}_X(\fb f))\rightarrow\Hom(I_Z(C_0+(\fm-\fb )f), E_2)\rightarrow$$$$\rightarrow\Hom(I_Z(C_0+(\fm-\fb )f), I_Z(C_0+(\fm-\fb )f)\overset{\phi}{\rightarrow}\Ext^1(I_Z(C_0+(\fm-\fb )f), \mathcal{O}_X(\fb f))\rightarrow\cdots$$

On one hand, 

$\begin{array}{ll}
  \Hom(I_Z(C_0+(\fm-\fb )f), \mathcal{O}_X(\fb f))   
  &\cong\Ext^2(\mathcal{O}_X(\fb f),I_Z(C_0+(\fm-\fb )f+K_X) \\
   & \cong\Ho^2 I_Z(C_0+(\fm-2\fb)f+K_X) \\
   &\cong\Ho^0\mathcal{O}_X(-C_0-(\fm-\fb)f)=0,
\end{array}$

\noindent which implies that $\Hom(I_Z(C_0+(\fm-\fb )f), E_2)\cong \ker(\phi)$. On the other hand, by Lemma \ref{lema_IZ},  $$\Hom(I_Z(C_0+(\fm-\fb )f), I_Z(C_0+(\fm-\fb )f)\cong K,$$ and by v),  $\phi$ is the map that sends $1\in K$ to the non trivial extension $e\in\Ext^1(I_Z(C_0+(\fm-\fb )f), \mathcal{O}_X(\fb f))$ defining $E_2$, which is injective. Hence 
$$\Hom(I_Z(C_0+(\fm-\fb )f), E_2)\cong \ker(\phi)=0.$$

\noindent Putting altogether, $\Hom(E_2,E_2)\hookrightarrow K$ and hence $E_2$ is a simple vector bundle.

Let us now prove that $E_2$ is a prioritary vector bundle.
Applying the functor $\Hom(-,E_2(-f))$ to the exact sequence (\ref{Frecursiva3}), we get $$\cdots\rightarrow \Ext^2(I_Z(C_0+(\fm-\fb)f), E_2(-f))\rightarrow\Ext^2(E_2,E_2(-f))\rightarrow \Ext^2(\mathcal{O}_X(\fb f), E_2(-f))\rightarrow0.$$

\noindent\textbf{Claim 3: $\Ext^2(\mathcal{O}_X(\fb f), E_2(-f))\cong\Ho^2E_2(-(\fb+1)f)=0$.}

\noindent\textbf{Proof of Claim 3:}
Twisting the exact sequence (\ref{Frecursiva3}) by $\mathcal{O}_X(-(\fb+1)f)$ and taking cohomology, we get $$\cdots\rightarrow \Ho^2\mathcal{O}_X(-f)\rightarrow\Ho^2E_2(-(\fb+1)f)\rightarrow\Ho^2I_Z(C_0+(\fm-2\fb-1)f)\rightarrow0,$$

\noindent and, since 

$\begin{array}{ll}
  \Ho^2\mathcal{O}_X(-f)   \cong\Ho^0\mathcal{O}_X(K_X+f)=0 &\\%\quad\mbox{and}\quad
  \Ho^2I_Z(C_0+(\fm-2\fb-1)f)   \cong \Ho^0\mathcal{O}_X(-C_0-(\fm-2\fb-1)f+K_X)=0,&
\end{array}$

\noindent we get $\Ho^2E_2(-(\fb+1)f)=0$.

\noindent \textbf{Claim 4: $\Ext^2(I_Z(C_0+(\fm-\fb )f),E_2(-f))=0$.}

\noindent \textbf{Proof of Claim 4:}
Applying the functor $\Hom(I_Z(C_0+(\fm-\fb )f,-)$ to the exact sequence (\ref{Frecursiva3}) twisted by $\mathcal{O}_X(-f)$, we get 
 \[\cdots\rightarrow \Ext^2(I_Z(C_0+(\fm-\fb )f),\mathcal{O}_X((\fb-1)f))\rightarrow\Ext^2(I_Z(C_0+(\fm-\fb )f),E_2(-f))\]
 \[\rightarrow\Ext^2(I_Z(C_0+(\fm-\fb )f),I_Z(C_0+(\fm-\fb-1 )f))\rightarrow0\]

Since $\Ext^2(I_Z(C_0+(\fm-\fb )f),\mathcal{O}_X((\fb-1)f))\cong\Ho^0I_Z(C_0+(\fm-2\fb+1)f+K_X)=0$ and, by Lemma \ref{lema_IZ}, $\Ext^2(I_Z(C_0+(\fm-\fb )f),I_Z(C_0+(\fm-\fb-1 )f)=0$, we obtain $\Ext^2(I_Z(C_0+(\fm-\fb )f),E_2(-f))=0$.

Putting altogether, we get $\Ext^2(E_2,E_2(-f))=0$ and hence $E_2$ is a  prioritary vector bundle.

\end{proof}

\begin{theorem}
\label{prop_ej3}
    Let $X$ be a ruled surface over a nonsingular curve $C$ of genus $g\geq0$. Let $c_2\gg0$ be an integer and $c_1=2C_0+\mathfrak{t}f\in \Pic(X)$ with $t=\deg(\mathfrak{t})\leq \min\{-2,-2g+1\}$. Then, there exists a rank 3 simple prioritary vector bundle $E$ on $X$ with $c_1(E)=c_1$ and $c_2(E)=c_2$ such that $\h^0E\geq k:= -t-g$.
\end{theorem}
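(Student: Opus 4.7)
My plan is to construct $E=E_3$ as a non-trivial extension
\[0\to E_2\to E_3\to \cO_X(L)\to 0,\]
to which Theorem~\ref{thprior1} applies, where $E_2$ is the simple prioritary rank-$2$ bundle produced by Lemma~\ref{lema_E2'}. The rank-$2$ block will carry both the sections and (through $|Z|$) the mass of $c_2$, while the quotient $\cO_X(L)$ lifts the first Chern class to $2C_0+\ft f$.

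I take $\fm=0$, pick an effective divisor $\fb$ on $C$ of degree $b:=-t-1\geq 1$ (possible since $t\leq -2$), and set $L:=C_0+(\ft-\fm)f$ together with $|Z|:=c_2+e+1$. A direct Whitney-sum computation then gives $c_1(E_2)=C_0+\fm f$ and $c_2(E_2)=b+|Z|$, whence $c_1(E_3)=2C_0+\ft f$ and $c_2(E_3)=c_2(E_2)+c_1(E_2)\cdot L=b+|Z|-e+t=c_2$. Plugging these data and a generic $0$-dimensional $Z\subset X$ into Lemma~\ref{lema_E2'} produces $E_2$.

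The five cohomological conditions needed to invoke Theorem~\ref{thprior1} all reduce to routine divisor-sign tests via Lemma~\ref{lema}, Serre duality, and the normalization of $\cE$. For $\h^0E_2(-L)=0$ I apply Lemma~\ref{lema_E2'}(i), noting that $-L+\fb f$ has $C_0$-coefficient $-1$ and that $|Z|>\h^0\cO_X(-L+C_0+(\fm-\fb)f)$ holds once $c_2\gg 0$. For $\h^2E_2(K_X-L)=0$ I apply Lemma~\ref{lema_E2'}(ii): the divisors $L-\fb f$ and $L-C_0-(\fm-\fb)f$ have respectively $f$-degree $2t+1\leq -3$ (so $\h^0=0$ by normalization of $\cE$) and $-1$, hence both are non-effective. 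The conditions $\h^0E_2(K_X-L+f)=0$ and $\h^2E_2(-f-L)=0$ follow by twisting the defining sequence of $E_2$: in every piece the $C_0$-coefficient is $\leq -2$ (possibly after Serre duality), forcing vanishing by Lemma~\ref{lema}. Finally, for the non-triviality of the extension class I need $\h^1E_2(-L)\neq 0$; twisting the defining sequence of $E_2$ by $-L$, the quotient $I_Z((2\fm-\fb-\ft)f)$ has $c_1$ of $f$-degree $1$, so $\h^1I_Z((2\fm-\fb-\ft)f)\geq |Z|-\h^0\cO_C(2\fm-\fb-\ft)>0$ for $c_2\gg 0$, while $\h^2\cO_X(-C_0+(\fb+\fm-\ft)f)$ vanishes by Serre duality and the $C_0$-sign test, so $\h^1E_2(-L)$ surjects onto a non-zero space.

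Taking cohomology of $0\to E_2\to E_3\to\cO_X(L)\to 0$ gives $\h^0E_3\geq\h^0E_2$, and the defining sequence of $E_2$ gives $\h^0E_2\geq\h^0\cO_C(\fb)\geq \chi\cO_C(\fb)=b+1-g=-t-g=k$, as required; the hypotheses $t\leq -2$ and $t\leq -2g+1$ are exactly what force $b\geq 1$ and $k\geq 0$. The main difficulty I expect is parameter balancing: the choice $b=-t-1$ is the smallest value that produces $\h^0\cO_C(\fb)\geq k$, and this then essentially forces $L=C_0+\ft f$ through the Chern class condition. The delicate point is that the non-vanishing $\h^1E_2(-L)\neq 0$ must hold simultaneously with four vanishings, and this is precisely why the hypothesis $c_2\gg 0$ (equivalently $|Z|$ large) is indispensable rather than using a minimal $|Z|$.
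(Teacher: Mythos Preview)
Your proposal is correct and follows essentially the same approach as the paper: the same rank-$2$ block from Lemma~\ref{lema_E2'} with $\fm=0$, $b=-t-1$, $|Z|=c_2+e+1$, the same quotient line bundle $L=C_0+\ft f$, and the same invocation of Theorem~\ref{thprior1} after the four vanishing checks. The only cosmetic differences are that the paper verifies $\h^1E_2(-L)\neq 0$ via a direct Riemann--Roch computation of $\chi E_2(-D)$ rather than through the defining sequence, and that your use of $\h^0\cO_C(\fb)\geq\chi\cO_C(\fb)$ is slightly more careful than the paper's equality (which tacitly assumes $b>2g-2$).
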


\begin{proof}
    Let us consider $\mathcal{G}_2$ the family of rank 2 vector bundles given by a non trivial extension of type 
    \begin{equation}
        %\label{Frecursiva3}
        0\rightarrow \mathcal{O}_X(\fb f)\rightarrow E_2\rightarrow I_Z(C_0-\fb f)\rightarrow0,
    \end{equation}
    where
    $\fb$ is a divisor on $C$ of degree $b=\deg(\fb)=-t-1$ and %:=k-1+g$.% with $k$ in range $$\max\{1,g\}\leq k<\frac{1}{2}[\beta-\alpha(e-m+2g-2)],$$ and 
     $Z$ is a generic 0-dimensional subscheme of length $|Z|=c_2+e+1\gg0$.

    Let us now consider the family $\mathcal{G}_3$ of rank-3 vector bundles $E_3$ with $c_1(E_3)=c_1$ and $c_2(E_3)=c_2$ given by a non trivial extension of type 
    \begin{equation}
        \label{Frecursiva3'}
        0\rightarrow E_2\rightarrow E_3\rightarrow\mathcal{O}_X(D)\rightarrow0,
    \end{equation}
    where $D=C_0+\mathfrak{t}f$ and $E_2\in\mathcal{G}_2$.

Notice that by construction, $\h^0E_3\geq\h^0 E_2\geq \h^0\mathcal{O}_X(\fb f)=b+1-g=k=-t-g$.
   
 \noindent\textbf{Claim 1: $\mathcal{G}_3\neq \emptyset$.}

\noindent \textbf{Proof of Claim 1:}
Let us first prove that the dimension of the space of extensions of type (\ref{Frecursiva3'}) is positive.
Since $$\ext^1(\mathcal{O}_X(D), E_2)=\h^1E_2(-D)\geq -\chi E_2(-D),$$
it is enough to prove that $\chi E_2(-D)<0$.

Since
$$c_1(E_2(-D))=c_1(E_2)-2D=-C_0-2\mathfrak{t}f$$ and  $$c_2(E_2(-D))=c_2(E_2)-c_1(E_2)\cdot D+D^2=|Z|+b+t=|Z|-1,$$
we get $$\chi E_2(-D)=2-g-|Z|<0,$$ where the last inequality follows from the fact that $
|Z|\gg0$.
Hence the dimension of the extension of type (\ref{Frecursiva3'}) is positive.
By construction, $E_3$ is a rank-3 vector bundle with $c_1(E_3)=C_0+D=2 C_0+\mathfrak{t}f$ and $c_2(E_3)=c_2(E_2)+D\cdot c_1(E_2)=|Z|-1-e=c_2.$
Hence $\mathcal{G}_3\neq \emptyset$.

%It only remains to see that $E_3$ is simple and prioritary.

\noindent\textbf{Claim 2: $E_3$ is simple and prioritary}

\noindent\textbf{Proof of Claim 2:}
By Lemma \ref{lema_E2'}, $E_2$ is a simple vector bundle and thus, by Theorem \ref{thprior1}, it is enough to see that $\h^0E_2(-D)=0$ and $\h^2E_2(K_X-D)=0$.

Since
$-D+\fb f=-C_0+(\fb -\mathfrak{t})f$ is a non effective divisor and $$\h^0\mathcal{O}_X(-D+C_0-\fb f)=\h^0\mathcal{O}_C(1)<|Z|,$$  it follows from Lemma \ref{lema_E2'} that $\Ho^0E_2(-D)=0$.
Analogously, since $D-\fb f=C_0-(2\mathfrak{t}+1)f$ and $D-C_0+\mathfrak{b}f=-f$ are non effective divisors, by Lemma \ref{lema_E2'} we get $\h^2E_2(K_X-D)=0$. Hence, by Theorem \ref{thprior1}, $E_3$ is simple.

By Lemma \ref{lema_E2'}, $E_2$ is a prioritary vector bundle and thus, by Theorem \ref{thprior1},  it is enough to see that $\h^0E_2(K_X+f-D)=0$ and $\h^2E_2(-f-D)=0$.

Since
 $$K_X+f+\fb f-D=-3C_0+(-\mathfrak{t}+\fb +1-2g+2+e)f$$ and $$K_X+f-D+C_0-\fb f=-2C_0+(-\mathfrak{t}-\fb+2g-1-e)f$$ are non effective divisors, it follows from Lemma \ref{lema_E2'} that $\Ho^0E_2(K_X+f-D)=0$.
 Following the same arguments, we see that $\Ho^2E_2(-f-D)=0$. % In fact, $$D+f+K_X-\fb f=-C_0+(\mathfrak{t}-\fb-\overline{e}+1)f$$  and $$K_X+f+D-C_0+\fb f=-2C_0+(\fb+1+\mathfrak{t}-\overline{e})f=K_X$$ are non effective divisors. 
 %Putting altogether,  we get 
 Hence, by Theorem \ref{thprior1},  $E_3$ is prioritary and, therefore,  $E_3$ is a simple prioritary vector bundle.

\end{proof}

To study the other cases of $c_1$, we will contruct rank-3 simple prioritary vector bundles using a different method.
Let us start proving some preparatory Lemmas.

\begin{lemma}
\label{lema_previo1}
Let $X$ be a ruled surface over a nonsingular curve $C$ of genus $g\geq0$  and $m=0,1$.
    Let $E_2$ be a rank 2 vector bundle on $X$  given by a of nontrivial extension of type
   \begin{equation}
       \label{fam_F1}
 0\rightarrow\mathcal{O}_X(-C_0+(\fb+1)f)\rightarrow E_2\rightarrow I_Z((m+1)C_0+(\mathfrak{d}-2\fb-1)f)\rightarrow0
\end{equation}
where $\mathfrak{d},\fb\in \Pic(C)$ are divisors of degree $d=\deg(\mathfrak{d})\geq0$ and $$b=\deg(\fb )>\max\{\frac{d-1}{3},\frac{d+2(g-1)-e}{3}\}$$  and $Z$ is a  0-dimensional subscheme of length $|Z|\geq0$. 
 Then:
 \begin{itemize}
     \item[(a)] $E_2$ is simple, 
     \item[(b)] $\Ho^2E_2(K_X-\fb f)=0$, 
     \item[(c)]$\Ho^0E_2(K_X-(\fb -1)f)=0$, 
     \item[(d)]$\Ho^0E_2(K_X+C_0-\fb f)=0$, 
    \item[(e)]$\Ho^0E_2(-\fb f)=0$.
 \end{itemize} 
\end{lemma}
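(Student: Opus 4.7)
The plan is to emulate the strategy of Lemma \ref{lema_E2'}: each of (b)--(e) reduces to two cohomology vanishings obtained by twisting the defining extension (\ref{fam_F1}) and using Serre duality, while (a) is obtained by applying $\Hom(-,E_2)$ to (\ref{fam_F1}) together with the nontriviality of the extension.

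For (b)--(e), I would twist (\ref{fam_F1}) by $\mathcal{O}_X(K_X-\fb f)$, $\mathcal{O}_X(K_X-(\fb-1)f)$, $\mathcal{O}_X(K_X+C_0-\fb f)$ and $\mathcal{O}_X(-\fb f)$ respectively. In each case the relevant $\Ho^0$ or $\Ho^2$ of $E_2$ is squeezed between the analogous cohomology of the kernel $\mathcal{O}_X(-C_0+(\fb+1)f)$ and of the cokernel $I_Z((m+1)C_0+(\mathfrak{d}-2\fb-1)f)$. The vanishings coming from the kernel are handled by Lemma \ref{lema} after passing through Serre duality (the resulting line bundle has $C_0$-coefficient of the wrong sign). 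The vanishings coming from the cokernel are obtained using $\Ho^2 I_Z(D)\cong \Ho^2\mathcal{O}_X(D)$ and $\h^0 I_Z(D)\leq \h^0\mathcal{O}_X(D)$, together with the isomorphism (\ref{igualdadcohom}) and the upper bound (\ref{desigcoh}) on $\h^0(C,S^a\mathcal{E}(\mathfrak{c}))$; the hypotheses $3b>d-1$ and $3b>d+2(g-1)-e$ are exactly what is needed to force every summand $\h^0(C,\mathcal{O}_C(\mathfrak{c}+i\fe))$ in that bound to have negative degree and hence vanish.

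For (a), applying $\Hom(I_Z((m+1)C_0+(\mathfrak{d}-2\fb-1)f),-)$ to (\ref{fam_F1}) gives a long exact sequence in which the connecting map
\[\phi:\Hom(I_Z(\cdots),I_Z(\cdots))\longrightarrow \Ext^1(I_Z(\cdots),\mathcal{O}_X(-C_0+(\fb+1)f))\]
sends $1\in K$ to the class of the nontrivial extension (\ref{fam_F1}); by Lemma \ref{lema_IZ} the source is $K$, so $\phi$ is injective. Combined with the vanishing of $\Hom(I_Z(\cdots),\mathcal{O}_X(-C_0+(\fb+1)f))$, which follows by Serre duality after reducing to $\h^0\mathcal{O}_X(-(m+2)C_0+\cdots)=0$ (immediate from Lemma \ref{lema} since $m+2\geq 2$), this yields $\Hom(I_Z(\cdots),E_2)=0$. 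Dually, twisting (\ref{fam_F1}) by $\mathcal{O}_X(C_0-(\fb+1)f)$ and applying the same method as in (c)--(e) gives $\Hom(\mathcal{O}_X(-C_0+(\fb+1)f),E_2)\cong K$. Applying $\Hom(-,E_2)$ to (\ref{fam_F1}) then forces $\Hom(E_2,E_2)\cong K$ exactly as in Lemma \ref{lema_E2'}.

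The main obstacle is in part (d) when $m=1$, as well as in the computation of $\Hom(\mathcal{O}_X(-C_0+(\fb+1)f),E_2)$: in both cases, twisting forces the cokernel to acquire a strictly positive $C_0$-coefficient, so Lemma \ref{lema} alone no longer suffices and one must invoke the upper bound (\ref{desigcoh}). The two numerical constraints $b>(d-1)/3$ and $b>(d+2(g-1)-e)/3$ are calibrated precisely so that every summand of (\ref{desigcoh}) has negative degree on $C$ and therefore vanishes, which is what makes the argument go through.
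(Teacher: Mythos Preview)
Your proposal is correct and follows the same strategy as the paper's proof: for (a) you apply $\Hom(-,E_2)$ and $\Hom(I_Z(\overline D),-)$ to (\ref{fam_F1}) and use the nontriviality of the extension together with Lemma~\ref{lema_IZ}, exactly as the paper does, and for (b)--(e) you twist (\ref{fam_F1}) and reduce to line-bundle vanishings, which the paper simply calls ``straightforward using the exact sequence (\ref{fam_F1}) and the lower bounds of $b$''.

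One minor point: your claim that Lemma~\ref{lema} ``no longer suffices'' once the $C_0$-coefficient is positive is not quite right. Lemma~\ref{lema} asserts that an effective divisor $aC_0+\mathfrak{b}f$ has \emph{both} $a\ge 0$ and $\deg\mathfrak{b}\ge 0$; hence a negative $f$-degree alone already forces $\h^0=0$, regardless of the sign of the $C_0$-coefficient. In the cases you flag (part (d) with $m=1$, and the computation of $\Hom(\mathcal{O}_X(-C_0+(\fb+1)f),E_2)$), the $f$-degree of the relevant divisor is $d-3b-1+2g-2-e$, respectively $d-3b-2$, and the hypotheses $3b>d+2(g-1)-e$ and $3b>d-1$ make these negative. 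This is precisely how the paper argues (``Since $3b>d-1$, the divisor $(m+2)C_0+(\mathfrak{d}-3\fb-2)f$ is non effective''). Your route through the bound (\ref{desigcoh}) is valid but unnecessary.
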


\begin{proof}

(a) To see that $E_2$ is simple, we apply the functor $\Hom(-,E_2)$ to the exact sequence (\ref{fam_F1}) and we get
\begin{equation}
    \label{simple2'}
    \begin{split}
        0\rightarrow \Hom(I_Z((m+1)C_0+(\mathfrak{d}-2\fb-1)f),E_2)\rightarrow \Hom(E_2,E_2)\rightarrow\\
        \rightarrow \Hom(\mathcal{O}_X(-C_0+(\fb+1)f, E_2)\rightarrow\cdots
    \end{split}
\end{equation}

\vspace{0.2cm}
Let us first prove that $\Hom(\mathcal{O}_X(-C_0+(\fb+1)f),E_2)\cong K$.
Notice that $$\Hom(\mathcal{O}_X(-C_0+(\fb +1)f,E_2)\cong \Ho^0E_2(C_0-(\fb+1) f).$$
If we twist the exact sequence (\ref{fam_F1}) by $\mathcal{O}_X(C_0-(\fb +1)f)$ and we take cohomology, we get
$$0\rightarrow \Ho^0\mathcal{O}_X\rightarrow \Ho^0E_2(C_0-(\fb+1)f)\rightarrow \Ho^0I_Z((m+2)C_0+(\mathfrak{d}-3\fb-2)f)\rightarrow\cdots$$

 Since $3b>d-1$, the divisor $(\fm+2)C_0+(\mathfrak{d}-3\fb-2)f$ is non effective and thus $\Ho^0I_Z((m+2)C_0+(\mathfrak{d}-3\fb -2)f)=0$, which implies that $\Ho^0E_2(C_0-(\fb+1) f)\cong K$.

\vspace{0.2cm}
Let us now see  that $\Hom(I_Z((m+1)C_0+(\fd-2\fb-1)f),E_2)=0$. Denote by $\overline{D}$ the divisor $(m+1)C_0+(\fd-2\fb-1)f$.
Applying the functor $\Hom(I_Z(\overline{D}),-)$ to the exact sequence (\ref{fam_F1}) we get
\begin{equation}
    \label{simple3'}
    \begin{split}
    0\rightarrow \Hom(I_Z(\overline{D}),\mathcal{O}_X(-C_0+(\fb +1)f))
    \rightarrow \Hom(I_Z(\overline{D}),E_2)\rightarrow\\
    \Hom(I_Z(\overline{D}),I_Z(\overline{D}))
    \overset{\alpha}{\rightarrow} \Ext^1(I_Z(\overline{D}),\mathcal{O}_X(-C_0+(\fb+1)f))\rightarrow\cdots
    \end{split}
\end{equation}

Since $-(m+2)C_0+(3\fb-\fd+2)f$  is non effective, 

$\begin{array}{ll}
    \Hom(I_Z(\overline{D}),\mathcal{O}_X(-C_0+(\fb+1) f)) &  \cong \Ext^2(\mathcal{O}_X(-C_0+(\fb+1) f),I_Z(\overline{D}+K_X)\\
   &\cong \Ho^2I_Z((m+2)C_0-(3\fb-\fd+2)f+K_X) \\
   & \cong \Ho^0\mathcal{O}_X(-(m+2)C_0+(3\fb-\fd+2)f)=0.
    
\end{array}$

On the other hand, by Lemma \ref{lema_IZ}, $\Hom(I_Z(\overline{D}),I_Z(\overline{D})\cong K.$

Hence, by (\ref{simple3'}) we get $$\Hom(I_Z(\overline{D}),E_2)\cong \ker (\alpha)=0,$$
where the last equality follows from the fact that $\alpha$ is the map that sends $1$ to the nontrivial extension $e\in \Ext^1(I_Z(\overline{D}),\mathcal{O}_X(-C_0+(\fb+1)f))$ and thus it is injective.

Finally, by (\ref{simple2'}), $1\leq\dim\Hom(E_2,E_2)\leq1$ and hence $E_2$ is simple.  

\vspace{0.2cm}
The proof of (b)-(e) is straightforward using the exact sequence (\ref{fam_F1}) and the lower bounds of $b$.

\end{proof}

\begin{lemma}
\label{lema_privio2}
Let $X$ be a ruled surface over a nonsingular curve $C$ of genus $g\geq0$.
     Let $E_3$ be a rank 3 vector bundle on $X$ given by a nontrivial extension 
\begin{equation}  \label{fam_E3}
 e: 0\rightarrow \mathcal{O}_X(\fb f)\rightarrow E_3\rightarrow E_2\rightarrow0
\end{equation}
where $E_2$ and $\fb$ are given as in Lemma \ref{lema_previo1}. Then, $E_3$ is simple and prioritary.
\end{lemma}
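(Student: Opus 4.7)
The plan is to parallel the argument used in Theorem \ref{thprior1}, adapted to the situation where the line bundle $\mathcal{O}_X(\fb f)$ sits as subobject rather than quotient of the extension. I will apply $\Hom(-,E_3)$ and $\Hom(-,E_3(-f))$ to (\ref{fam_E3}) and read off the simplicity of $E_3$ and the vanishing $\Ext^2(E_3,E_3(-f))=0$ from cohomological statements about $E_2$, $\mathcal{O}_X(\fb f)$, and auxiliary line bundles, all controlled by the vanishings (a)-(e) of Lemma \ref{lema_previo1} together with the numerical hypothesis on $b$.

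\textbf{Simplicity.} Applying $\Hom(-,E_3)$ to (\ref{fam_E3}) gives the exact sequence
\[
0\to\Hom(E_2,E_3)\to\Hom(E_3,E_3)\to\Hom(\mathcal{O}_X(\fb f),E_3)\to\cdots.
\]
Twisting (\ref{fam_E3}) by $\mathcal{O}_X(-\fb f)$ and using $\Ho^0 E_2(-\fb f)=0$ from Lemma \ref{lema_previo1}(e) identifies $\Hom(\mathcal{O}_X(\fb f),E_3)\cong\Ho^0 E_3(-\fb f)\cong K$. Applying $\Hom(E_2,-)$ to (\ref{fam_E3}), Serre duality together with Lemma \ref{lema_previo1}(b) kill $\Hom(E_2,\mathcal{O}_X(\fb f))\cong\Ho^2 E_2(K_X-\fb f)^\vee$; the simplicity of $E_2$ (Lemma \ref{lema_previo1}(a)), together with the injectivity of the connecting map $\Hom(E_2,E_2)\to\Ext^1(E_2,\mathcal{O}_X(\fb f))$ (which sends $1$ to the nontrivial class $e$), then forces $\Hom(E_2,E_3)=0$. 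Hence $\dim\Hom(E_3,E_3)\leq 1$ and the identity provides the reverse inequality.

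\textbf{Prioritariness and the main obstacle.} Applying $\Hom(-,E_3(-f))$ to (\ref{fam_E3}) reduces $\Ext^2(E_3,E_3(-f))=0$ to killing the two terms $\Ho^2 E_3(-(\fb+1)f)$ and $\Ext^2(E_2,E_3(-f))$. For the first, twisting (\ref{fam_E3}) by $\mathcal{O}_X(-(\fb+1)f)$ and (\ref{fam_F1}) by the same divisor reduces matters to $\Ho^2\mathcal{O}_X(-f)=0$ and to $\Ho^2$-vanishings of line bundles whose $C_0$-coefficient (after Serre dualizing) is strictly negative, which follow from Lemma \ref{lema}. For the second, applying $\Hom(E_2,-)$ to $(\ref{fam_E3})\otimes\mathcal{O}_X(-f)$ leaves two contributions: $\Ext^2(E_2,\mathcal{O}_X((\fb-1)f))\cong\Ho^0 E_2(K_X-(\fb-1)f)^\vee$, which vanishes by Lemma \ref{lema_previo1}(c), and $\Ext^2(E_2,E_2(-f))$. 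The main obstacle is therefore to prove that $E_2$ itself is prioritary. This I plan to do by applying $\Hom(-,E_2(-f))$ to (\ref{fam_F1}) and using the ideal-sheaf sequence for $I_Z(\overline{D})$, with $\overline{D}=(m+1)C_0+(\fd-2\fb-1)f$, to reduce to the vanishing of $\Ho^2 E_2(C_0-(\fb+2)f)$ and of $\Ho^2 E_2(-\overline{D}-f)$. Further twists of (\ref{fam_F1}) translate these, via Serre duality, into statements about $\Ho^0$ of line bundles on $X$; Lemma \ref{lema} handles those of the form $\mathcal{O}_X(aC_0+\mathfrak{c}f)$ with $a<0$, while the residual term $\Ho^0\mathcal{O}_X(mC_0+(-3\fb+\fd-1-\overline{e})f)$ is precisely where the bound $3b>d+2(g-1)-e$ is sharply used via the inequality (\ref{desigcoh}).
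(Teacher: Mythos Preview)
Your argument is correct and follows essentially the same strategy as the paper's proof: the simplicity part is identical, and the prioritary part differs only in the order in which you resolve the two arguments of $\Ext^2(E_3,E_3(-f))$. The paper applies $\Hom(E_3,-)$ to (\ref{fam_E3})$\otimes\mathcal{O}_X(-f)$ and then to (\ref{fam_F1})$\otimes\mathcal{O}_X(-f)$, invoking Lemma \ref{lema_previo1}(d) and the fact that $I_Z$ is prioritary (Lemma \ref{lema_IZ}); you instead apply $\Hom(-,E_3(-f))$, which forces you to establish $\Ext^2(E_2,E_2(-f))=0$ directly and so bypasses part (d) in favour of the same numerical bound $3b>d-\overline{e}$ applied one step later. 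Either unwinding works, and yours has the minor bonus of recording that $E_2$ is itself prioritary.
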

\begin{proof}

First of all we will see that $E_3$ is simple. To this end,
    applying the functor $\Hom(-,E_3 )$ to the exact sequence (\ref{fam_E3}), we get
    \begin{equation}
       \label{simple5} 
       0\rightarrow \Hom(E_2, E_3)\rightarrow \Hom(E_3, E_3)\rightarrow \Hom(\mathcal{O}_X(\fb f), E_3)\rightarrow\cdots
    \end{equation}

\noindent Twisting the exact sequence (\ref{fam_E3}) by $\mathcal{O}_X(-\fb f)$ and taking cohomology, we get
$$0\rightarrow \Ho^0\mathcal{O}_X\rightarrow \Ho^0E_3(-\fb f)\rightarrow \Ho^0 E_2(-\fb f)\rightarrow\cdots$$

\noindent By Lemma \ref{lema_previo1}, $\Ho^0 E_2(-\fb f)=0$, which implies that $$\Hom(\mathcal{O}_X(\fb f), E_3)\cong\Ho^0E_3(-\fb f)\cong \Ho^0\mathcal{O}_X\cong K.$$

%\textbf{Claim 1.2: $\Hom(E_2, E_3)=0$}.
On the other hand, $\Hom(E_2, E_3)=0$. In fact, since by  Lemma \ref{lema_previo1}, $$\Hom(E_2,\mathcal{O}_X(\fb f)\cong \Ho^2E_2(K_X-\fb f)=0,$$
it follows from the fact that by Lemma \ref{lema_previo1}, $E_2$ is simple and we are considering non-trivial extensions. Thus, 
using (\ref{simple5})  we conclude that $E_3$ is simple.

Now we are going to see that $E_3$ is prioritary.
Applying the functor $\Hom(E_3,-)$ to the exact sequence (\ref{fam_E3}) twisted by $\mathcal{O}_X(-f)$, we get
\begin{equation}
    \cdots\rightarrow \Ext^2(E_3, \mathcal{O}_X((\fb-1) f))\rightarrow \Ext^2(E_3, E_3(-f))\rightarrow \Ext^2(E_3,E_2(-f))\rightarrow0.
\end{equation}

%\textbf{Claim 2.1: $\Ext^2(E_3, \mathcal{O}_X(\fb f-f))=0$}
First of all, let us see that $\Ext^2(E_3, \mathcal{O}_X((\fb-1) f))=0$.
Notice that $$\Ext^2(E_3, \mathcal{O}_X((\fb-1) f))\cong  \Ho^0E_3(K_X-(\fb-1) f).$$

Since $K_X+f$ is non effective and by Lemma \ref{lema_previo1},  $\Ho^0E_2(K_X-(\fb-1) f)=0$, using the exact sequence (\ref{fam_E3}) we get $\Ho^0E_3(K_X-(\fb-1) f)=0$.

%\textbf{Claim 2.2: $\Ext^2(E_3,E_2(-f))=0$ }
Let us now see that $\Ext^2(E_3,E_2(-f))=0$. By definition, $E_2$ is given by a nontrivial extension as in (\ref{fam_F1}).
Applying the functor $\Hom(E_3,-)$ to it twisted by $\mathcal{O}_X(-f)$, we get
$$\cdots\rightarrow \Ext^2(E_3,\mathcal{O}_X(-C_0+\fb f))\rightarrow \Ext^2(E_3, E_2(-f))\rightarrow \Ext^2(E_3, I_Z((m+1)C_0+(\mathfrak{d}-2\fb-2)f))\rightarrow0.$$

By Lemma \ref{lema_previo1} and the fact that $K_X+C_0$ is non effective, we deduce that
$$\Ext^2(E_3,\mathcal{O}_X(-C_0+\fb f))\cong \Ho^0E_3(K_X+C_0-\fb f)=0.$$

Let us now prove that $$\Ext^2(E_3, I_Z((m+1)C_0+(\mathfrak{d}-2\fb-2)f))=0.$$
If we apply the functor $\Hom(-, I_Z((m+1)C_0+(\mathfrak{d}-2\fb-2)f))$ to the exact sequence (\ref{fam_E3}) we get
$$\cdots\rightarrow \Ext^2(E_2, I_Z((m+1)C_0+(\mathfrak{d}-2\fb-2)f))\rightarrow \Ext^2(E_3, \mathcal{O}_X((m+1)C_0+(\mathfrak{d}-2\fb-2)f))$$
$$\rightarrow \Ext^2(\mathcal{O}_X(\fb f), I_Z((m+1)C_0+(\mathfrak{d}-2\fb-2)f))\rightarrow0.$$

Since $K_X-(\fm+1)C_0-(\mathfrak{d}-3\fb-2)f$ is not effective, by duality, 

$$ \Ext^2(\mathcal{O}_X(\fb f), I_Z((\fm+1)C_0+(\mathfrak{d}-2\fb-2)f))  =0 $$

% \cong \Ho^2I_Z((\fm+1)C_0+(\mathfrak{d}-3\fb-2)f)

On the other hand, applying the functor $\Hom(-,I_Z((m+1)C_0+(\fd-2\fb-2)f))$ to the exact sequence (\ref{fam_F1}) and using the fact that $I_Z$ is prioritary we get $$\Ext^2(E_2, I_Z((m+1)C_0+(\fd-2\fb-2)f))=0$$ 
and hence $\Ext^2(E_3,E_2(-f))=0$.
Therefore, $E_3$ is also prioritary.

\end{proof}

Let us first consider the case $c_1=C_0+\mathfrak{d}f$.

\begin{theorem}
\label{th2derk3}
Let $X$ be a ruled surface over a non singular curve $C$ of genus $g\geq0$. Let us consider $c_2>>0$ an integer,  $\mathfrak{d}\in Pic(C)$ with $d=\deg(\mathfrak{d})\geq0$. Let $l$, $0\leq l<5$, be an integer equivalent to $c_2-2e-3+d$ module 5.
Then, there exists a rank-3 simple prioritary vector bundle $E$ with $c_1(E)=C_0+\mathfrak{d}f$ and $c_2(E)=c_2$, such that $$\h^0 E\geq k:=\frac{1}{5}(c_2-2e-3+d-l)+1-g.$$
\end{theorem}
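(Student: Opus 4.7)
The plan is to apply Lemma \ref{lema_privio2} after choosing numerical data that produce the prescribed $(c_1,c_2)$. Set
\[
b := \tfrac{1}{5}(c_2 - 2e - 3 + d - l), \qquad m := 1, \qquad |Z| := l.
\]
By the definition of $l$, $b$ is an integer, and for $c_2 \gg 0$ it is large, so in particular $b > \max\{(d-1)/3,\,(d+2(g-1)-e)/3\}$ (the hypothesis of Lemma \ref{lema_previo1}) and $b \geq 2g-1$ hold. Fix any divisor $\fb$ on $C$ of degree $b$ and any generic $0$-dimensional $Z\subset X$ of length $l$ (possibly empty if $l=0$).

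I will build $E = E_3$ in two steps. First I produce a rank-$2$ bundle $E_2$ as a nontrivial extension of type (\ref{fam_F1}) with $m=1$, and then I produce $E_3$ as a nontrivial extension of type (\ref{fam_E3}) of $E_2$ by $\mathcal{O}_X(\fb f)$. That both extensions can be chosen nontrivial reduces, via the standard bound $\ext^1\geq -\chi$, to Riemann--Roch estimates on the relevant twisted sheaves; these are strictly positive for $c_2\gg 0$ because the leading $b^2$-contribution to $-\chi$ dominates. Once existence is granted, Lemma \ref{lema_privio2} directly yields that $E_3$ is simple and prioritary. A direct Chern-class computation using $c(B)=c(A)\,c(C)$ on each of the two extensions gives $c_1(E_3)=C_0+\fd f$ and
\[
c_2(E_3) = |Z| + 2e - d + 4b + 3 + \fb f \cdot c_1(E_2) = l + 2e - d + 5b + 3 = c_2,
\]
by the choice of $b$ and $l$.

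For the section bound, the exact sequence $0\to \mathcal{O}_X(\fb f)\to E_3\to E_2\to 0$ yields
\[
h^0 E_3 \;\geq\; h^0 \mathcal{O}_X(\fb f) \;=\; h^0 \mathcal{O}_C(\fb) \;=\; b+1-g \;=\; k,
\]
where the middle identity is (\ref{igualdadcohom}) with $a=0$ and the next follows from Serre duality once $b\geq 2g-1$. The only delicate step in this plan is the Riemann--Roch positivity needed to pick the two extensions nontrivial: both $\ext^1$-groups must be checked to be positive-dimensional; this is automatic in the asymptotic regime $c_2\gg 0$, but is essentially the only place where the large-$c_2$ assumption is actually used. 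Everything else is Chern-class bookkeeping or an immediate invocation of Lemmas \ref{lema_previo1} and \ref{lema_privio2}.
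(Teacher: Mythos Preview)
Your proof is correct and follows essentially the same route as the paper's: the same choice of $b$, $m=1$, $|Z|=l$, the same two-step extension via (\ref{fam_F1}) and (\ref{fam_E3}), and the same appeal to Lemma~\ref{lema_privio2} for simplicity and prioritarity. One small inaccuracy: the Euler characteristics you need for the two $\ext^1$ bounds are \emph{linear} in $b$, not quadratic, so there is no ``$b^2$-contribution''; the positivity for $c_2\gg 0$ still holds, but for the stated linear reason.
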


\begin{proof}  
Let us consider $\mathcal{F}_2$ the family of rank two vector bundles $E_2$ on $X$ given by  a nontrivial extension of type
\begin{equation}
\label{F2}
  0\rightarrow\mathcal{O}_X(-C_0+(\fb +1)f)\rightarrow E_2\rightarrow I_Z(2C_0+(\mathfrak{d}-2\fb-1)f)\rightarrow0  
\end{equation}

\noindent where $\fb\in\Pic(C)$ has degree $b=\frac{1}{5}(c_2-2e-3+d-l)$ and $Z$ is a 0-dimensional subscheme of length $|Z|=l$.

\vspace{0.2cm}
Since $c_2\gg0$,  we can assume without loss of generality that $$b>\max\{0,2(g-1),\frac{d-1}{3},\frac{d+2(g-1)-e}{3}\}.$$

\noindent\textbf{Claim 1: $\mathcal{F}_2\neq\emptyset$.}

\noindent\textbf{Proof of Claim 1:} First of all we will see that the space of extensions has positive dimension. Notice that
 $$\ext^1:=\ext^1(I_Z(2C_0+(\mathfrak{d}-2\fb-1)f),\mathcal{O}_X(-C_0+(\fb +1)f))=\h^1I_Z(3C_0+(\mathfrak{d}-3\fb-2)f+K_X)$$
 $$\geq -\chi I_Z(3C_0+(\mathfrak{d}-3\fb-2)f+K_X)=|Z|-\chi\mathcal{O}_X(3C_0+(\mathfrak{d}-3\fb-2)f+K_X).$$

Since  $c_2>>0$, 
$$\chi\mathcal{O}_X(-3C_0-(\mathfrak{d}-3\fb-2)f)=-6+2g-3e+2d-6b<0$$ and thus
 $$\ext^1=l+6-2g+3e-2d+6b>0.$$ Therefore the space of extensions of type (\ref{F2}) is nonempty.

\vspace{0.2cm}
On the other hand, since the divisor $3C_0+(\mathfrak{d}-3\fb-2)f+K_X$ is not effective  by the lower bound $3b>d-1$, $Z$ satisfyies the Cayley-Bacharach property for the linear system $|3C_0+(\mathfrak{d}-3\fb-2)f+K_X|$. Hence,  $E_2$ is a rank 2 vector bundle with $c_1(E_2)=C_0+(\mathfrak{d}-\mathfrak{b})f$ and $c_2(E_2)=|Z|+(2e+4b+3-d)=l+2e+4b+3-d$.

Putting altogether, $\mathcal{F}_2$ is nonempty.
\vspace{0.2cm}
%\textbf{Claim 2: $E_2$ is simple}

%By Lemma \ref{lema_previo1}, we have that $E_2$ is simple. 

Let us now consider $\mathcal{F}_3$ the family of rank 3 vector bundles $E_3$ on $X$ with $c_1(E_3)=C_0+\fd f$ and $c_2(E_3)=c_2$ given by a non trivial extension of type
\begin{equation}
    \label{F3}
    0\rightarrow \mathcal{O}_X(\fb f)\rightarrow E_3\rightarrow E_2\rightarrow0,
\end{equation}
where $E_2\in \mathcal{F}_2$.

\noindent\textbf{Claim 2: $ \mathcal{F}_3\neq \emptyset$.}

\noindent\textbf{Proof of Claim 2:}
By construction, $E_3$ is a rank 3 vector bundle on $X$ with $c_1(E_3)= C_0+\fd f$ and $c_2(E_3)=c_2$.
%Let us now prove that $\ext^1(E_2,\mathcal{O}_X(\fb f))>0$.
Notice that $$s:=\ext^1(E_2,\mathcal{O}_X(\fb f))=\ext^1(\mathcal{O}_X(\fb f), E_2(K_X))=\h^1E_2(K_X-\fb f)\geq-\chi E_2(K_X-\fb f)$$
 and, by Serre duality, $-\chi E_2(K_X-\fb f)=-\chi E^*_2(\fb f)$.

\noindent Since

$\begin{array}{ll}
   c_1(E^*_2(\fb f))= & -c_1(E_2)+2\fb f=-C_0+(3\fb-\mathfrak{d})f  \thinspace \thinspace\text{and} \thinspace  \\
   c_2(E^*_2(\fb f))= & c_2(E_2)+(\fb f)c_1(E^*_2)+(\fb f)^2=2e-d+3b+3+l,
\end{array}$

\noindent by Riemann-Roch Theorem we have
$$\chi E^*_2(\fb f)=2(1-g)-\frac{1}{2}(-C_0+(3\fb-\mathfrak{d})f)\cdot K_X+\frac{1}{2}(-C_0+(3\fb-\mathfrak{d})f)^2-(2e-d+3b+3+l))$$
$$=l+2e-d+3b+3.$$

Hence $s>0$ and $\mathcal{F}_3$ is non empty.

\vspace{0.2cm}

It follows from Lemma \ref{lema_privio2}  that $E_3$ is a simple  prioritary vector bundle. 
Moreover, by (\ref{F3}) and the fact that $b>\max\{0,2(g-1)\}$, we have $$\h^0E_3\geq \h^0\mathcal{O}_X(\fb f)=\h^0\mathcal{O}_C(\fb)=b+1-g=\frac{1}{5}(c_2-2e-3+d)+1-g.$$ 
    
\end{proof}

Finally, we will deal with the case $c_1=\fd f$.

\begin{theorem}
\label{th3derk3}
Let $X$ be a ruled surface over a nonsingular curve $C$ of genus $g\geq0$. Let $c_2>>0$ be an integer and $\mathfrak{d}\in \Pic(C)$ such that $d=\deg(\mathfrak{d})\geq0$. 
Then, there exists a rank-3 simple prioritary vector bundle with $c_1(E)=\mathfrak{d}f$ and $c_2(E)=c_2$ such that $$\h^0E\geq \frac{1}{3}(c_2+d+1-3g-e-l)+1-g,$$ where $l$ is an integer $1\leq l\leq3$ equivalent to $c_2+d+1-3g-e$ modulo $3$.
\end{theorem}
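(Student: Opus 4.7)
The plan is to mimic the construction of Theorem \ref{th2derk3} but to use the case $m=0$ of Lemma \ref{lema_previo1}, so that $c_1(E_2)$ lies in $(\fd-\fb)f$ with no $C_0$-component; then an extension of $E_2$ by $\cO_X(\fb f)$ will produce a rank $3$ bundle with first Chern class $\fd f$ as required. Concretely, I would fix a line bundle $\fb$ on $C$ of degree
$$b=\tfrac{1}{3}(c_2+d+1-3g-e-l),$$
and a generic $0$-dimensional subscheme $Z\subset X$ of length $|Z|=3g+l-3$ (using the hypothesis $c_2\gg 0$ to ensure $b$ is large enough that $b>\max\{0,2(g-1),(d-1)/3,(d+2(g-1)-e)/3\}$, which is what Lemma \ref{lema_previo1} requires with $m=0$).

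First I would define the family $\mathcal{F}_2$ of rank $2$ bundles $E_2$ given by nontrivial extensions
$$0\to \cO_X(-C_0+(\fb+1)f)\to E_2\to I_Z(C_0+(\fd-2\fb-1)f)\to 0,$$
and show $\mathcal{F}_2\neq\emptyset$. For this, as in the previous theorem, I would estimate
$$\ext^1(I_Z(C_0+(\fd-2\fb-1)f),\cO_X(-C_0+(\fb+1)f))\;\geq\;-\chi I_Z(2C_0+(\fd-3\fb-2)f+K_X),$$
and apply Riemann--Roch to show this is positive when $c_2\gg 0$. The non-effectivity of the divisor $2C_0+(\fd-3\fb-2)f+K_X$ (forced by the lower bound on $b$) gives the Cayley--Bacharach property, so $E_2$ is locally free; a short Chern class computation then confirms $c_1(E_2)=(\fd-\fb)f$ and $c_2(E_2)=e-d+3b+2+|Z|$.

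Next I would define $\mathcal{F}_3$ as the family of rank $3$ bundles $E_3$ arising from nontrivial extensions
$$0\to \cO_X(\fb f)\to E_3\to E_2\to 0,$$
for some $E_2\in\mathcal{F}_2$. Non-emptiness follows from
$$\ext^1(E_2,\cO_X(\fb f))=\h^1E_2(K_X-\fb f)\geq -\chi E_2^*(\fb f),$$
and Riemann--Roch on $E_2^*(\fb f)$ gives a positive expression in $c_2$. A direct check then yields $c_1(E_3)=\fd f$ and $c_2(E_3)=c_2$. Since the bounds on $b$ match exactly the hypotheses of Lemma \ref{lema_privio2}, I can apply it to conclude that $E_3$ is both simple and prioritary. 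Finally, the short exact sequence defining $E_3$ together with $b\geq 2g-1$ gives
$$\h^0 E_3\geq \h^0\cO_X(\fb f)=\h^0\cO_C(\fb)\geq b+1-g=\tfrac{1}{3}(c_2+d+1-3g-e-l)+1-g,$$
which is the desired bound.

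The principal obstacle is the bookkeeping around the residue $l$: one must choose $l\in\{1,2,3\}$ in the right class modulo $3$ so that $b$ is an integer and simultaneously $|Z|=3g+l-3\geq 0$, while also ensuring all the non-effectivity conditions needed by Lemma \ref{lema_previo1} and Lemma \ref{lema_privio2} remain intact. The rest of the proof is then a mechanical parallel to Theorem \ref{th2derk3}, the key technical input being the simplicity-and-prioritariness transfer supplied by Lemmas \ref{lema_previo1} and \ref{lema_privio2}.
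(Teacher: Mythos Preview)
Your approach is exactly the paper's: use the $m=0$ case of Lemma \ref{lema_previo1} to build $E_2$, then invoke Lemma \ref{lema_privio2} for the extension $0\to\cO_X(\fb f)\to E_3\to E_2\to 0$. The one substantive discrepancy is your numerical choice. The paper takes
\[
b=\tfrac{1}{3}(c_2+d-2-e-l),\qquad |Z|=l\in\{1,2,3\},
\]
whereas you take $b=\tfrac{1}{3}(c_2+d+1-3g-e-l)$ and $|Z|=3g+l-3$. Both satisfy $c_2(E_3)=e-d+3b+2+|Z|=c_2$, so the Chern-class bookkeeping is fine either way, and the congruence conditions on $l$ coincide modulo $3$. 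But your choice of $|Z|$ is not always admissible: for $g=0$ and $l\in\{1,2\}$ you get $|Z|<0$, and since $l$ is determined by $c_2,d,e$ rather than freely chosen, this genuinely obstructs the construction in those cases. You flag the issue but do not resolve it; the paper's choice $|Z|=l$ sidesteps it entirely and works uniformly for all $g\geq 0$.

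Apart from this, everything you outline (positivity of the $\Ext^1$ via Riemann--Roch, Cayley--Bacharach from non-effectivity of $2C_0+(\fd-3\fb-2)f+K_X$, the lower bound $\h^0E_3\geq b+1-g$ from $b>2(g-1)$) goes through exactly as in the proof of Theorem \ref{th2derk3}, and the paper's own proof is literally ``follows step by step'' with the two displayed families. So once you replace your $(b,|Z|)$ by the paper's pair, your write-up is complete.
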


\begin{proof}
The proof follows step by step as  in the proof of Theorem \ref{th2derk3}. In this case we consider the family $\mathcal{G}_2$ of rank two vector bundles $E_2$ on $X$  given by a nontrivial extension of type
\begin{equation}0\rightarrow\mathcal{O}_X(-C_0+(\fb+1)f)\rightarrow E_2\rightarrow I_Z(C_0+(\mathfrak{d}-2\fb-1)f)\rightarrow0
\label{fam_G1}
\end{equation}
where $b=\deg(\fb)=\frac{1}{3}(c_2+d-2-e-l)$ and $Z$ is a 0-dimensional subscheme of length $|Z|=l$ and 
 the family $\mathcal{G}_3$ of rank 3 vector bundles $E_3$ on $X$ given by a non trivial extension of type
     \begin{equation}
        \label{eqG2}0\rightarrow \mathcal{O}_X(\fb f)\rightarrow E_3\rightarrow E_2\rightarrow0, 
     \end{equation}
where $E_2\in\mathcal{G}_2$.
It can be seen that $E_3$ is a simple prioritary vector bundle with $c_1(E)=\mathfrak{d}f$, $c_2(E)=c_2$ and $\h^0E\geq \frac{1}{3}(c_2+d+1-3g-e-l)+1-g.$

\end{proof}

\section{Rank 4 prioritary bundles}

The goal of this section is to prove the existence of simple prioritary rank $4$ vector bundles and bound from below the dimension of their space of sections. To this end,
we can also apply Theorem \ref{thprior1} and the families constructed in Section 3. 
First of all, in order to simplify the proof of some results in this section, we summarize in the following two Lemmas some easy cohomological facts.

\begin{lemma}
\label{lema_recursivo}
    Let $X$ be a ruled surface of genus $g\geq0$, $c_2\gg0$ an integer and $D\in \Pic(X)$ a  divisor such that $(-1)^\alpha D$ and $K_X+f+2(-1)^\alpha D$ are  non effective divisors for $\alpha\in\{0,1\}$. 
    Let us consider $\mathcal{S}_3$ the family of rank-3 vector bundles given by non trivial extensions of type  \begin{equation}
        0\rightarrow E_2\rightarrow E_3\rightarrow \mathcal{O}_X(D)\rightarrow0
        \label{E3}
    \end{equation}
    where $E_2$ is a rank-2 vector bundle on $X$ such that $\Ho^1E_{2}(-D)\neq0$.
    The following holds:
   \begin{itemize}
        \item[i)] If $\h^0E_2(D)=0$ then $\h^0E_3(D)=0$.
         \item[ii)] If $\h^0E_2(K_X+f+D)=0$ then $\h^0E_3(K_X+f+D)=0$.
         
           \item[iii)] If $\h^2E_2(K_X+D)=0$ then $\h^2E_3(K_X+D)=0$.
          \item[iv)] If $\h^2E_2(-f+D)=0$ then $\h^2E_3(-f+D)=0$.
    \end{itemize}
    
\end{lemma}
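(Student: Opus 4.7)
The plan is that each of (i)--(iv) follows from the same three-step template: tensor the defining extension (\ref{E3}) with the appropriate line bundle, extract the relevant piece of the long exact cohomology sequence, and verify that the $\Ho^i$ of the $\mathcal{O}_X(D)$-side vanishes thanks to the divisorial hypotheses on $D$.

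For (i), I would tensor (\ref{E3}) by $\mathcal{O}_X(D)$, producing
\[0 \to E_2(D) \to E_3(D) \to \mathcal{O}_X(2D) \to 0,\]
and look at the beginning of the long exact sequence
\[0 \to \Ho^0 E_2(D) \to \Ho^0 E_3(D) \to \Ho^0 \mathcal{O}_X(2D) \to \cdots\]
The left term vanishes by hypothesis; the right vanishes because the non-effectivity of $D$ (the case $\alpha=0$) means $D=aC_0+\fb f$ has $a<0$ or $\deg\fb<0$, hence the same for $2D$, and Lemma \ref{lema} gives $\h^0\mathcal{O}_X(2D)=0$. Pinched between these, $\h^0 E_3(D)=0$. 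The same recipe with the twist $\mathcal{O}_X(K_X+f+D)$ yields (ii), the right-end term $\h^0\mathcal{O}_X(K_X+f+2D)$ now vanishing directly by the $\alpha=0$ part of the hypothesis.

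For (iii) and (iv), I would apply the dual version of the argument, tensoring (\ref{E3}) by $\mathcal{O}_X(K_X+D)$ and $\mathcal{O}_X(-f+D)$ respectively and extracting the surjective tails
\[\Ho^2 E_2(K_X+D) \to \Ho^2 E_3(K_X+D) \to \Ho^2 \mathcal{O}_X(K_X+2D) \to 0,\]
\[\Ho^2 E_2(-f+D) \to \Ho^2 E_3(-f+D) \to \Ho^2 \mathcal{O}_X(-f+2D) \to 0.\]
By Serre duality, $\h^2\mathcal{O}_X(K_X+2D)=\h^0\mathcal{O}_X(-2D)$ and $\h^2\mathcal{O}_X(-f+2D)=\h^0\mathcal{O}_X(K_X+f-2D)$. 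The first vanishes because non-effectivity of $-D$ (the case $\alpha=1$) propagates to $-2D$ via Lemma \ref{lema}; the second is precisely the $\alpha=1$ part of the hypothesis. Combined with the given vanishings on $E_2$, these force the corresponding $\h^2 E_3$ to vanish.

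The only technical observation needed is that non-effectivity of $D$ (resp.\ $-D$) implies the same for $2D$ (resp.\ $-2D$), which is immediate from Lemma \ref{lema}. Beyond that, the proof is a routine diagram chase, so I do not foresee a real obstacle; the lemma is essentially a bookkeeping device that will let later arguments propagate vanishing from $E_2$ to $E_3$ when verifying the hypotheses of Theorem \ref{thprior1} for rank~$4$ bundles built recursively.
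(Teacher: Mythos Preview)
Your overall strategy---tensoring (\ref{E3}) by the appropriate line bundle and reading off the long exact sequence---is exactly what the paper intends by ``straightforward using (\ref{E3}).'' Parts (ii) and (iv) are handled cleanly, since the vanishing of the line-bundle term is literally one of the stated hypotheses.

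There is, however, a genuine slip in your justification for (i) and (iii). You write that non-effectivity of $D$ ``means $D=aC_0+\fb f$ has $a<0$ or $\deg\fb<0$.'' That is the \emph{converse} of Lemma~\ref{lema}, and it is false in general: Lemma~\ref{lema} only says that effective divisors have $a,b\geq 0$, not that every divisor with $a,b\geq 0$ is effective. So you cannot deduce $\h^0\mathcal{O}_X(2D)=0$ (resp.\ $\h^0\mathcal{O}_X(-2D)=0$) purely from $D$ (resp.\ $-D$) being non-effective. What is actually needed for your argument is the non-effectivity of $2D$ and $-2D$ themselves.

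This is arguably a minor imprecision in the lemma's hypotheses rather than a flaw in your method: in every application in Section~4 the divisor $D$ is chosen so that one of its numerical coefficients is strictly negative (e.g.\ $D=C_0-f$ or $D=C_0-(\fb+1)f$ with $b\geq 0$), whence $2D$ and $-2D$ are non-effective directly via Lemma~\ref{lema}. So your proof goes through in all the cases where the lemma is actually used; just be aware that the implication ``$D$ non-effective $\Rightarrow$ $2D$ non-effective'' is not valid without this extra numerical input.
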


\begin{proof}

 It is straightforward using (\ref{E3}).

\end{proof}

\begin{lemma}
    \label{lemma_E2}
    Let $X$ be a ruled surface over a non singular curve $C$ of genus $g\geq0$ and let $L\in \Pic(X)$. Let us consider the family $\mathcal{H}_2$ of rank-2 vector bundle on $X$ given by a non-trivial extension of type
    \begin{equation}
        \label{primeraextension_E2}
        \mathcal{H}_2: 0\rightarrow \mathcal{O}_X\rightarrow E_2\rightarrow I_Z(f)\rightarrow0,
    \end{equation}
    where $Z$ is a generic 0-dimensional scheme of length $|Z|\geq1$. 
    The following holds: \begin{itemize}
        \item[i)] If $-L$ is a non effective divisor and $|Z|>\h^0\mathcal{O}_X(-L+f)$,  then $\h^0E_2(-L)=0$.
%         \item[ii)] If $K_X+f-L$ is a non effective divisor and  $|Z|>\h^0\mathcal{O}_X(K_X+2f-L)$, then $\h^0E_2(K_X+f-L)=0$.
           \item[ii)] If $L$ is a non effective divisor, then $\h^2E_2(K_X-L)=0$.
         % \item[iv)] If $K_X+f+L$ is a non effective divisor, then $\h^2E_2(-f-L)=0$.
         
    \end{itemize}
 \end{lemma}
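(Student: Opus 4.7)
The plan is to twist the defining sequence $0\to\mathcal{O}_X\to E_2\to I_Z(f)\to 0$ by $\mathcal{O}_X(-L)$ for part (i) and by $\mathcal{O}_X(K_X-L)$ for part (ii), then read off the desired vanishings from the resulting long exact cohomology sequences. The flanking terms will be controlled using the hypotheses on $L$ together with Proposition \ref{zgen} and Serre duality, in exactly the same spirit as the proof of (i)-(ii) of Lemma \ref{lema_E2'}.

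For part (i), I twist by $\mathcal{O}_X(-L)$ to get
\[
0\to \mathcal{O}_X(-L)\to E_2(-L)\to I_Z(-L+f)\to 0,
\]
and take the associated long exact sequence in cohomology. The left-hand term $\h^0\mathcal{O}_X(-L)$ vanishes since $-L$ is non-effective by hypothesis, and the right-hand term $\h^0 I_Z(-L+f)$ vanishes by Proposition \ref{zgen}, since $Z$ is a generic $0$-dimensional subscheme with $|Z|>\h^0\mathcal{O}_X(-L+f)$. Squeezed between two zeros, $\h^0 E_2(-L)=0$.

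For part (ii), I twist by $\mathcal{O}_X(K_X-L)$ to obtain
\[
0\to \mathcal{O}_X(K_X-L)\to E_2(K_X-L)\to I_Z(K_X-L+f)\to 0,
\]
and must show that the two neighbouring $H^2$ terms vanish. Serre duality gives $\h^2\mathcal{O}_X(K_X-L)=\h^0\mathcal{O}_X(L)=0$, since $L$ is non-effective. For the ideal-sheaf term, the exact sequence $0\to I_Z\to \mathcal{O}_X\to \mathcal{O}_Z\to 0$ twisted by $\mathcal{O}_X(K_X-L+f)$, together with the vanishing of $\h^i\mathcal{O}_Z$ for $i\geq 1$ (as $Z$ is $0$-dimensional), identifies $\h^2 I_Z(K_X-L+f)\cong \h^2\mathcal{O}_X(K_X-L+f)$, which by Serre duality equals $\h^0\mathcal{O}_X(L-f)$. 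The latter vanishes because the restriction sequence $0\to\mathcal{O}_X(L-f)\to\mathcal{O}_X(L)\to\mathcal{O}_f(L|_f)\to 0$ forces $\h^0\mathcal{O}_X(L-f)\leq \h^0\mathcal{O}_X(L)=0$. Hence $\h^2 E_2(K_X-L)=0$.

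There is no real obstacle beyond the single subtle point of observing that non-effectivity of $L$ passes to non-effectivity of $L-f$; once that is in hand, both arguments are mechanical applications of the long exact sequence in cohomology.
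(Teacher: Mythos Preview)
Your argument is correct and follows exactly the approach the paper intends: the paper's own proof reads ``It is straightforward using (\ref{primeraextension_E2})'', and your twisting-and-cohomology computation, together with Proposition~\ref{zgen} and Serre duality, is precisely the intended straightforward verification (mirroring the proof of (i)--(ii) in Lemma~\ref{lema_E2'}). Your explicit observation that non-effectivity of $L$ forces non-effectivity of $L-f$ via the restriction sequence is the one small point the paper leaves implicit, and you handle it correctly.
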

 \begin{proof}
 It is straightfoward using (\ref{primeraextension_E2}).

 \end{proof}

Arguing as in Lemma \ref{lema_E2'}, we get the following result.
%    \textcolor{red}{Mejor como Lemma general?? Se usa para $\overline{e}$ positivo y negativo}
\begin{lemma}
\label{prop_ej1}
    Let $X$ be a ruled surface over a nonsingular curve $C$ of genus $g\geq0$ and $Z$ be a generic 0-dimensional scheme of length $|Z|\geq1$. 
   Let  $D$ be a divisor such that 
\begin{itemize}
    \item[i)]  $-D$ is  non effective and $|Z|>\h^0\mathcal{O}_X(-D+f)$.
           \item[ii)] $D$ is a non effective divisor.
       \end{itemize}

        \noindent  Let  $\mathcal{F}_3$ be the family of rank-3 vector bundles given by non trivial extensions of type 
     $$  \mathcal{F}_3: 0\rightarrow E_2\rightarrow E_3\rightarrow \mathcal{O}_X(D)\rightarrow0$$
    where $E_2\in\mathcal{H}_2$ of Lemma \ref{lemma_E2} and such that $\h^1 E_2(-D)>0$. 
         
    Then, $E_3$ is a simple prioritary vector bundle.
\end{lemma}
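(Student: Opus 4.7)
The plan is to apply Theorem \ref{thprior1} with $r = 3$, $L = D$, and $E_{r-1} = E_2 \in \mathcal{H}_2$. To do so, I first need to establish that $E_2$ itself is simple and prioritary, and then verify the four cohomological vanishings required by parts (a) and (b) of that theorem.

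For the simplicity and prioritary property of $E_2$, I would follow the template of Claims 1--4 in the proof of Lemma \ref{lema_E2'}: apply $\Hom(-, E_2)$ and $\Hom(-, E_2(-f))$ to the defining extension (\ref{primeraextension_E2}), and exploit that both $\mathcal{O}_X$ and $I_Z(f)$ are simple and prioritary by Lemma \ref{lema_IZ}. Non-triviality of the extension makes the relevant connecting maps into the pertinent $\Ext^1$ injective, forcing $\Hom(E_2, E_2) \cong K$ and $\Ext^2(E_2, E_2(-f)) = 0$.

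For the conditions of Theorem \ref{thprior1}(a), the vanishings $\h^0 E_2(-D) = 0$ and $\h^2 E_2(K_X - D) = 0$ are precisely the conclusions of Lemma \ref{lemma_E2}(i) and (ii) applied with $L := D$, invoking hypotheses (i) and (ii) respectively; this already yields $E_3$ simple. For the prioritary conditions $\h^0 E_2(K_X + f - D) = 0$ and $\h^2 E_2(-f - D) = 0$, I would twist (\ref{primeraextension_E2}) by the appropriate line bundles and take cohomology, reducing each to vanishings on $\mathcal{O}_X$-twists and on $I_Z$-twists. The key observation is that $K_X + f$ has $C_0$-coefficient $-2$, so by Lemma \ref{lema} each of the divisors $K_X + f - D$, $K_X + 2f - D$, $K_X + f + D$ and $K_X + D$ becomes non effective once the $C_0$-coefficient of $D$ is suitably controlled via hypotheses (i) and (ii); the length bound in (i) together with genericity of $Z$ (cf.\ Proposition \ref{zgen}) handles the $I_Z$-contributions.

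The main obstacle I anticipate is the clean verification of the prioritary conditions. The non-effectiveness of $K_X + f \pm D$ is not a formal consequence of ``both $D$ and $-D$ being non effective'', so I will likely split into a couple of cases according to whether the non-effectiveness of $D$ (resp.\ $-D$) is forced by its $C_0$-component or by its fibre-component, and use Serre duality to deal with the $\h^2$ vanishings in parallel with the $\h^0$ ones. Once these four vanishings are in place, Theorem \ref{thprior1}(a)--(b) delivers ``simple'' and ``prioritary'' simultaneously for $E_3$.
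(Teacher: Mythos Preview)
Your proposal is correct and essentially matches the paper, which offers no detailed argument beyond ``Arguing as in Lemma \ref{lema_E2'}''; your route through Theorem \ref{thprior1} combined with Lemma \ref{lemma_E2} organizes exactly the same verifications. Regarding the obstacle you flag: in every application of this lemma in the paper (Theorems \ref{thrk4''} and \ref{rango4}) the divisor $D$ has $C_0$-coefficient equal to $1$, so each of $K_X+f\pm D$, $K_X+2f-D$, and $K_X+D$ has negative $C_0$-coefficient and is non effective by Lemma \ref{lema}, which collapses your anticipated case analysis immediately.
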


First of all, we will focus the attention on rank $4$ bundles $E$ on $X$ with $c_1(E)=f$. In this case we use two different constructions according to the sign of the invariant of $X$, $\overline{e}:=e-2(g-1)$.

   \begin{theorem}
   \label{thrk4''}
    Let $X$ be a ruled surface over a non singular curve $C$ of genus $g\geq0$ such that $\overline{e}:=e-2(g-1)\geq0$ and let $c_2\gg0$ be an integer. Then, there exists a rank-4 simple prioritary vector bundle $E$ with $c_1(E)=f$ and $c_2(E)=c_2$ such that $\h^0E\geq1$.
\end{theorem}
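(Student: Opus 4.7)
The plan is to build $E$ as the end of a chain of two extensions, iterating the machinery of Section~3. Fix the divisor $D = C_0 + \mathfrak{b}f$ with $b := \deg(\mathfrak{b}) = -1$, take $E_2$ in the family $\mathcal{H}_2$ of Lemma~\ref{lemma_E2} with $Z \subset X$ a generic $0$-dimensional subscheme of length $|Z| = c_2 - e - 2$ (which is $\gg 0$ for $c_2 \gg 0$), and form two extensions
\[
0 \to E_2 \to E_3 \to \mathcal{O}_X(D) \to 0, \qquad 0 \to E_3 \to E \to \mathcal{O}_X(-D) \to 0.
\]
A direct Chern-class computation gives $c_1(E) = c_1(E_2) + D - D = f$ and $c_2(E) = |Z| + e - 2b = c_2$, and the section bound is automatic from the cohomology sequences of the two extensions: $\h^0 E \geq \h^0 E_3 \geq \h^0 E_2 \geq \h^0 \mathcal{O}_X = 1$.

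To realise $E_3$ I would invoke Lemma~\ref{prop_ej1}. Its divisor hypotheses reduce to $\pm D$ being non-effective, which is immediate from Lemma~\ref{lema} since $D$ has $f$-degree $-1$ and $-D$ has $C_0$-coefficient $-1$, and to $|Z| > \h^0 \mathcal{O}_X(-D + f)$, which holds because the right-hand side vanishes ($-D + f$ has negative $C_0$-coefficient). The non-trivial extension required by Theorem~\ref{thprior1}, namely $\h^1 E_2(-D) \neq 0$, follows because Riemann-Roch forces $\chi E_2(-D) \to -\infty$ as $|Z| \to \infty$ while $\h^0 E_2(-D) = 0$ by Lemma~\ref{lemma_E2}(i). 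Lemma~\ref{prop_ej1} then delivers $E_3$ as a simple prioritary rank-$3$ bundle.

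For the rank-$4$ step I would apply Theorem~\ref{thprior1} with $L = -D$, which demands the four vanishings $\h^0 E_3(D) = 0$, $\h^2 E_3(K_X + D) = 0$, $\h^0 E_3(K_X + D + f) = 0$ and $\h^2 E_3(D - f) = 0$. These are precisely conclusions (i)--(iv) of Lemma~\ref{lema_recursivo}, whose hypotheses ($\pm D$ and $K_X + f \pm 2D$ all non-effective) hold by inspection: $K_X + f + 2D \equiv (-\overline{e} - 1)f$ has negative degree using $\overline{e} \geq 0$, and $K_X + f - 2D$ has $C_0$-coefficient $-4$. Each of the reduced $E_2$-level vanishings fits the template of Lemma~\ref{lemma_E2}(i) or (ii), since in every relevant twist a negative $C_0$-coefficient is manifest; the bound $|Z| > \h^0 \mathcal{O}_X(-L + f)$ required by part (i) reduces to $|Z| \gg 0$ because the right-hand side is bounded independently of $c_2$. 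Non-triviality of the second extension follows from $\chi E_3(D) < 0$ for $c_2 \gg 0$ combined with $\h^0 E_3(D) = 0$. The main obstacle I anticipate is bookkeeping rather than depth: one must juggle several twists by $D$, $f$ and $K_X$ and check at each stage that the twist produced by Lemma~\ref{lema_recursivo} is exactly the one required by Theorem~\ref{thprior1}, but the hypothesis $\overline{e} \geq 0$ lets the single choice $b = -1$ satisfy every non-effectiveness condition uniformly in $c_2$.
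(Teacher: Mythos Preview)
Your proposal is correct and follows the paper's proof essentially verbatim: the paper also takes $D=C_0-f$ (your $D=C_0+\mathfrak{b}f$ with $b=-1$), starts from $E_2\in\mathcal{H}_2$ with $|Z|=c_2-e-2$, and builds the same two-step chain $E_2\to E_3\to E_4$, verifying the hypotheses of Theorem~\ref{thprior1} via Lemma~\ref{lemma_E2}, Lemma~\ref{prop_ej1}, and Lemma~\ref{lema_recursivo}. One small inaccuracy: the claim that ``in every relevant twist a negative $C_0$-coefficient is manifest'' fails for the check $\h^0E_2(D)=0$, where Lemma~\ref{lemma_E2}(i) requires $|Z|>\h^0\mathcal{O}_X(D+f)=\h^0\mathcal{O}_X(C_0)$ and $C_0$ has no negative coefficient; but your fallback remark that the right-hand side is bounded independently of $c_2$ while $|Z|\gg0$ covers this, exactly as the paper does.
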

    
    \begin{proof}
       First we consider the family $\mathcal{F}_2$ of rank-2 vector bundles given by a non trivial extension of type
    \begin{equation}
        \label{fam_estables2}
        0\rightarrow \mathcal{O}_X\rightarrow E_2\rightarrow I_Z(f)\rightarrow0,
    \end{equation}
    where $Z$ is a generic 0-dimensional subscheme of length $|Z|=c_2-2-e\gg0$.
Let us consider $D=C_0-f\in \Pic(X)$.

First of all, notice that  $(-1)^\alpha D$,  $K_X+f+D$, $K_X+f-D$ are non effective divisors for $\alpha\in\{0,1\}$ and $|Z|>2-e\geq \h^0\mathcal{O}_X(D+f)$. Therefore, by Lemma \ref{lemma_E2},
  \begin{equation}
  \label{condiciones2}
\h^0E_2(D)=0; \h^0E_2(K_X+f+D)=0; \h^2E_2(K_X+D)=0; \h^2E_2(-f+D)=0.      
  \end{equation}

Let us now consider the family $\mathcal{F}_3$ of rank-3 vector bundles on $X$ given by a non trivial extension of type
$$\mathcal{F}_3: 0\rightarrow E_2\rightarrow E_3\rightarrow \mathcal{O}_X(D)\rightarrow0,$$

\noindent where $E_2\in\mathcal{F}_2$. Using the exact sequence (\ref{fam_estables2}), we see that $\h^1E_2(-D)=|Z|>0$.
Finally consider the family $\mathcal{F}_4$ of rank-4 vector bundles on $X$ given by  a non trivial extension of type
\begin{equation}
\label{Frecursiva4}
    0\rightarrow E_3\rightarrow E_4\rightarrow \mathcal{O}_X(-D)\rightarrow0,
\end{equation}

\noindent where $E_3\in\mathcal{F}_3$.

By construction, $E_4$ is a rank-4 vector bundle with $c_1(E_4)=f$ and $c_2(E_4)=c_2$. Thus, Riemann-Roch Theorem allows us to see that $\ext^1(\mathcal{O}_X(-D),E_3)>0$.
Hence $\mathcal{F}_4\neq\emptyset$. 

Finally, let us see that $E_4$ is  simple and prioritary.
Since $(-1)^\alpha D$ and $K_X+f+2(-1)^{\alpha} D$ are non effective, by Lemma \ref{lema_recursivo}, conditions (\ref{condiciones2}) imply that 
$$\h^0E_3(D) =0;  \h^0E_3(K_X+f+D)=0; \h^2E_3(K_X+D)=0;\h^2E_3(-f+D) =0.$$

 Moreover, since $D$ is under assumptions of Lemma \ref{prop_ej1}, $E_3$ is a simple prioritary rank-3 vector bundle.
Putting altogether, $E_3$ is under conditions of Theorem \ref{thprior1} and hence  $E_4$ is a simple prioritary vector bundle.
Finally, by construction, $\h^0E_4\geq \h^0E_3\geq \h^0E_2\geq1$.

    \end{proof}
 \begin{theorem}
 \label{rango4}
    Let $X$ be a ruled surface over a non singular curve $C$ of genus $g\geq0$ such that $\overline{e}:=e-2(g-1)<0$ and $c_2\gg0$ an integer. Then there exists a rank-4 simple prioritary  vector bundle $E_4$ with $c_1(E_4)=f$ and $c_2(E_4)=c_2$ such that $\h^0E_4\geq1$.
\end{theorem}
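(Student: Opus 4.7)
The plan is to mimic the proof of Theorem \ref{thrk4''} essentially verbatim, replacing only the auxiliary divisor $D = C_0 - f$ by a new divisor suited to the constraint $\overline{e} < 0$. The single obstruction to reusing the old $D$ is that Lemma \ref{lema_recursivo} demands $K_X + f + 2D$ to be non-effective, and for $D = C_0 - f$ this divisor equals $-(\overline{e}+1)f$, which becomes effective precisely in the regime $\overline{e} \leq -1$.

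I would therefore take $D = C_0 + \mathfrak{b}f$ where $\mathfrak{b} \in \Pic(C)$ has degree $b$ satisfying $b \leq -2$ and $2b < \overline{e} - 1$. With this choice, $\pm D$ and
\begin{equation*}
K_X + f + 2D = (1 - \overline{e} + 2b)f, \qquad K_X + f - 2D = -4C_0 + (1 - \overline{e} - 2b)f
\end{equation*}
are all non-effective by Lemma \ref{lema} (the $C_0$-coordinate of $-D$ and of $K_X + f - 2D$ is negative, while the $f$-coordinate of $D$ and of $K_X + f + 2D$ is negative by our bounds on $b$), so Lemma \ref{lema_recursivo} applies with this $D$. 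I would then form the three successive non-trivial extensions
\begin{equation*}
0 \to \mathcal{O}_X \to E_2 \to I_Z(f) \to 0, \quad 0 \to E_2 \to E_3 \to \mathcal{O}_X(D) \to 0, \quad 0 \to E_3 \to E_4 \to \mathcal{O}_X(-D) \to 0,
\end{equation*}
with $Z \subset X$ a generic $0$-dimensional subscheme of length $|Z| = c_2 - e + 2b \gg 0$. A standard Chern-class computation then yields $c_1(E_4) = f$ and $c_2(E_4) = c_2$.

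The verification proceeds exactly as in Theorem \ref{thrk4''}: Lemma \ref{lemma_E2} gives the four vanishings $\h^0 E_2(D) = \h^0 E_2(K_X + f + D) = \h^2 E_2(K_X + D) = \h^2 E_2(-f + D) = 0$, each of which reduces to checking that one of the divisors $D$, $K_X + f + D$, $-D$, $K_X + f - D$ has a negative $(a,b)$-coordinate, which is immediate from our choice of $b$. Lemma \ref{lema_recursivo} then propagates these vanishings from $E_2$ to $E_3$; Lemma \ref{prop_ej1} supplies that $E_3$ is itself simple and prioritary; and Theorem \ref{thprior1}, applied with $L = -D$, upgrades this to $E_4$. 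Existence of each extension follows from Riemann--Roch, since in each case the relevant $\chi$ is dominated by the large term $-c_2$. Finally, unwinding the three extensions yields $\h^0 E_4 \geq \h^0 E_3 \geq \h^0 E_2 \geq \h^0 \mathcal{O}_X = 1$.

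The main labour is the explicit non-effectiveness bookkeeping needed to apply Lemma \ref{lemma_E2} at each of the four twists; this is routine case analysis on the signs of the $(a,b)$-coordinates of the divisors involved, together with the sharper lower bound on $-b$ forced by $\overline{e} < 0$. No new conceptual ingredient is required beyond the adjustment of $D$.
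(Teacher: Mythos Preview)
Your proposal is correct and follows essentially the same route as the paper: the paper's proof also mimics Theorem \ref{thrk4''} verbatim, replacing $D=C_0-f$ by $D=C_0+(\overline{e}-1)f$ and taking $|Z|=c_2+e-4g+2$, which is precisely your construction with the specific choice $b=\overline{e}-1$ (note this satisfies your constraints $b\leq -2$ and $2b<\overline{e}-1$, and your formula $|Z|=c_2-e+2b$ recovers theirs).
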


      \begin{proof}
       The proof follows step by step as in the proof of Theorem \ref{thrk4''} but starting with the family $\mathcal{F}_2$ of rank-2 vector bundles $E_2$ given by a non trivial extension of type
    \begin{equation}
        \label{fam_estables'}
        0\rightarrow \mathcal{O}_X\rightarrow E_2\rightarrow I_Z(f)\rightarrow0,
    \end{equation}
    where $Z$ is a generic 0-dimensional subscheme of length $|Z|=c_2+e-4g+2\gg0$ and taking $D=C_0+(\overline{e}-1)f\in \Pic(X)$.
 \end{proof}

We end the section dealing with the case $c_1=C_1+\fm f$ which will arise as a consequence of the results of rank-3 simple prioritary vector bundles.

\begin{theorem}
   \label{th_rk4}
    Let $X$ be a ruled surface over a non singular curve $C$ of genus $g\geq0$ and   $c_1=C_0+\fm f\in \Pic(X)$ with $m=\deg(\fm)\in\{0,1\}$. Let  $c_2\gg0$ be an integer  and $3\leq l\leq5$ be the numerical class of $c_2-2-e$ module 3. Then, there exists a rank-4 simple prioritary vector bundle $E$ with $c_1(E)=c_1$ and $c_2(E)=c_2$ such that $\h^0E\geq k:=\frac{1}{3}(c_2-2-e-l)+1-g$.  
    
\end{theorem}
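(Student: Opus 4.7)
The plan is to realize $E_4$ as a non-trivial extension
\[
0 \to E_3 \to E_4 \to \mathcal{O}_X(L) \to 0
\]
and then apply Theorem \ref{thprior1} so that simplicity and prioritariness descend from $E_3$ to $E_4$. Because the target lower bound for $\h^0 E_4$ has the same $\tfrac{1}{3}$-coefficient as the rank-3 bound in Theorem \ref{th3derk3}, the natural choice of $E_3$ is a rank-3 simple prioritary bundle furnished by Theorem \ref{th3derk3} with $c_1(E_3) = \fd f$ and $c_2(E_3) = c_2 - d$, setting $L := C_0 + (\fm - \fd) f$ for a divisor $\fd$ on $C$ of degree $d$ to be chosen. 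A direct Chern class computation then yields $c_1(E_4) = C_0 + \fm f$ and $c_2(E_4) = c_2$, as required.

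The first thing I would check is that the family of such extensions is non-empty, which amounts to $\ext^1(\mathcal{O}_X(L), E_3) = \h^1 E_3(-L) > 0$; this follows from the bound $\ext^1 \geq -\chi E_3(-L)$ together with a Riemann-Roch computation whose leading term is negative and linear in $c_2 \gg 0$. Next, to invoke Theorem \ref{thprior1}, I would verify the four cohomological vanishings
\[
\h^0 E_3(-L) = 0,\qquad \h^2 E_3(K_X - L) = 0,\qquad \h^0 E_3(K_X + f - L) = 0,\qquad \h^2 E_3(-f - L) = 0.
\]
Since $E_3$ is built in the proof of Theorem \ref{th3derk3} through two successive extensions whose outer terms are line bundles on $X$ and an ideal sheaf $I_Z$ on $X$, each vanishing reduces, via the associated long exact cohomology sequences, to the non-effectivity of explicit divisors of the form $aC_0 + \mathfrak{c}f$ on $X$; by Lemma \ref{lema} these checks amount to verifying that either $a < 0$ or $\deg(\mathfrak{c}) < 0$, which will be ensured by choosing $d$, and hence the auxiliary $b$ used inside the rank-3 construction, sufficiently large, as permitted by $c_2 \gg 0$.

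The bound on global sections then follows from the long exact sequence of the defining extension: $\h^0 E_4 \geq \h^0 E_3$, and Theorem \ref{th3derk3} delivers $\h^0 E_3 \geq \tfrac{1}{3}(c_2 + 1 - 3g - e - l') + 1 - g$ for a residue $l' \in [1,3]$ determined by $c_2, g, e$ modulo $3$. A careful choice of $\fd$, possibly combined with a generic $0$-dimensional subscheme of controlled length introduced in the rank-3 step, aligns this with the target $k = \tfrac{1}{3}(c_2 - 2 - e - l) + 1 - g$ for $l \in [3,5]$. I expect the hard part to be exactly this modular bookkeeping combined with the simultaneous enforcement of all four vanishings above; once the parameters are fixed, the remaining verifications reduce to routine cohomology chasing through the chain of short exact sequences defining $E_3$.
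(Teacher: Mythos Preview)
Your route differs from the paper's and has a genuine obstruction in the simplicity step. With $E_3$ taken from Theorem~\ref{th3derk3}, so built from
\[
0\to\mathcal{O}_X(-C_0+(\fb+1)f)\to E_2\to I_Z(C_0+(\fd-2\fb-1)f)\to 0,\qquad 0\to\mathcal{O}_X(\fb f)\to E_3\to E_2\to 0,
\]
and $L=C_0+(\fm-\fd)f$, chase the two hypotheses of Theorem~\ref{thprior1}(a) through these sequences. The last term controlling $\h^0 E_3(-L)=0$ is $\h^0 I_Z\bigl((2\fd-2\fb-1-\fm)f\bigr)$, while the last term controlling $\h^2 E_3(K_X-L)=0$ is $\h^2 I_Z(\cdot)=\h^0\mathcal{O}_X\bigl((2\fb-2\fd+1+\fm)f\bigr)$. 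These two multiples of $f$ are negatives of one another, so at least one has non-negative degree and is \emph{not} killed by Lemma~\ref{lema}. Since $|Z|\le 3$ in Theorem~\ref{th3derk3}, genericity of $Z$ cannot absorb the surviving sections once $d$ moves away from $b$; and your proposal to take $d$ large only makes the first term worse. Note also that substituting $c_2(E_3)=c_2-d$ into the formula for $b$ in Theorem~\ref{th3derk3} gives $b=\tfrac{1}{3}(c_2-2-e-l')$, which is \emph{independent} of $d$, so you cannot tune $b$ through $d$ as you suggest.

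The paper avoids this by not going through Theorem~\ref{th3derk3}. It starts instead from a rank-2 bundle $E_2\in\mathcal{S}_2$ of Lemma~\ref{lema_E2'} with $c_1(E_2)=C_0+\fm f$, sets $D=C_0-(\fb+1)f$, and forms the \emph{symmetric} pair of extensions
\[
0\to E_2\to E_3\to\mathcal{O}_X(D)\to 0,\qquad 0\to E_3\to E_4\to\mathcal{O}_X(-D)\to 0,
\]
so that $c_1(E_3)\equiv 2C_0+\cdots$ (the regime of Theorem~\ref{prop_ej3}, not~\ref{th3derk3}). With $L=-D$, every divisor that appears when the four vanishings of Theorem~\ref{thprior1} are pushed down through the sequences carries a nonzero $C_0$-coefficient; non-effectivity then follows directly from Lemma~\ref{lema} with no competing constraints, and the section bound $\h^0 E_4\ge\h^0\mathcal{O}_X(\fb f)=b+1-g$ drops out immediately.
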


\begin{proof}
    Let us consider the family $\mathcal{G}_2$ of rank-2 vector bundles on $X$ given by a non-trivial extension of type $$0\rightarrow\mathcal{O}_X(\fb f)\rightarrow E_2\rightarrow I_{Z}(C_0+(\fm-\fb)f)\rightarrow0,$$
    where  $\fb \in \Pic(X)$ is such that $b=\deg (\fb)=\frac{1}{3}(c_2-2-e-l)$ and $Z$ is a generic 0-dimensional subscheme of length $|Z|=l$. Notice that, since $c_2\gg0$, we can assume $b>\max\{0,2(g-1)\}.$
    Let us fix $D= C_0-(\fb+1)f$.

      First of all, notice that $(-1)^\alpha [D+\fb f]$, $(-1)^\alpha[D+C_0+(\fm-\fb)f]$, $K_X+f+D+(-1)^\alpha[C_0+(\fm-\fb)f)]$ and $K_X+f+ D+(-1)^{\alpha}\alpha(\fb f)$ are non effective divisors for $\alpha\in\{0,1\}$. %In fact, $D+\fb f$ is non effective since $E_2$ is $H$-stable and therefore $\mathcal{O}_X(\fb f)\hookrightarrow E_2$ implies $b<t$, and the other cases are straighfoward. 
      Therefore,   by Lemma \ref{lema_E2'}, 
\begin{equation}
    \label{condiciones}
    \h^0E_2(D)=0; \h^0E_2(K_X+f+D)=0; \h^2E_2(K_X+D)=0; \h^2E_2(-f-D)=0. 
\end{equation}

Let us now consider the family $\mathcal{G}_3$ of rank-3 vector bundles on $X$ given by a non trivial extension of type
$$ 0\rightarrow E_2\rightarrow E_3\rightarrow \mathcal{O}_X(D)\rightarrow0,$$
where $E_2\in\mathcal{G}_2$.
Finally, consider the family $\mathcal{G}_4$ of rank-4 vector bundles on $X$ given by a non trivial extension of type
$$0\rightarrow E_3\rightarrow E_4\rightarrow \mathcal{O}_X(-D)\rightarrow0$$
where $E_3\in\mathcal{G}_3$.
Notice that by Riemann-Roch, $\ext^1(\mathcal{O}_X(-D),E_3)>0$ and hence $\mathcal{G}_4\neq\emptyset$. 
Moreover, by construction
$E_4$ is a rank-4 vector bundle with 
$    c_1(E_4)=C_0+\fm f$ and $c_2(E_4)= c_2.$

\noindent\textbf{Claim : $E_4$ is  simple and prioritary.}

\noindent\textbf{Proof of the Claim:}
Since $(-1)^\alpha D$ and $K_X+f+2(-1)^\alpha D$ are non effective divisors for $\alpha\in\{0,1\}$, conditions (\ref{condiciones}) imply by Lemma \ref{lema_recursivo} that 
$$ \h^0E_3(D) =0; \h^0E_3(K_X+f+D)=0; \h^2E_3(K_X+D)=0;\h^2E_3(-f-D) =0. $$

 Moreover, arguing as in the proof of Theorem \ref{prop_ej3}, we see that $E_3$ is a simple prioritary rank-3 vector bundle.

Therefore, $E_3$ is under conditions of Theorem \ref{thprior1} and hence,  $E_4$ is a simple prioritary vector bundle. 

Finally, by construction and the fact that $b>\max\{0,2(g-1)\}$, $$\h^0E_4\geq \h^0E_3\geq \h^0E_2\geq\h^0\mathcal{O}_X(\fb f)=\h^0\mathcal{O}_C(\fb)=b+1-g.$$

\end{proof}

\section{Higher rank prioritary bundles}

The goal of this section is to prove the existence of simple prioritary vector bundles of arbitrary high rank with at least certain number of sections. In general it is a difficult problem to construct indecomposable rank $r$ vector bundles on a smooth projective variety $X$ when $r$ is big compared with $\dim X$. We will achive our goal using a generalization of Cayley-Bacharach property and in addition we will see that the bundles that we construct are prioritary with a certain number of sections. To this end, we start with a technical Lemma that we will use later on.

\begin{lemma}
\label{lema_previo3}
    Let $X$ be a ruled surface over a non singular curve $C$ of genus $g\geq0.$
    Let $E$ be a rank $r$ vector bundle on $X$ given by a nontrivial extension of type
    \begin{equation}
    \label{fam_r}
    0\rightarrow \mathcal{O}_X(\fb f)\rightarrow E\rightarrow \bigoplus_{i=1}^{r-1}I_{Z_i}(D_i)\rightarrow0
\end{equation}
where for every $i$, 
$Z_i$ is a 0-dimensional subscheme.
Let us assume that:

i) for every $i\neq j$, $D_i-D_j$ is non effective,

ii) for every i, $\fb f-D_i$ is non effective, 

iii) for every $i\neq j$, $D_i-D_j+K_X+f$ is non effective,

iv) for every $i$, $\fb f-D_i+K_X+f$ is non effective, 

v) for every $i$, $\h^0I_{Z_i}(D_i-\fb f)=0$, 

vi) for every $i$, $\h^0I_{Z_i}(K_X+f+D_i-\fb f)=0$.

\noindent Then, $E$ is a simple  prioritary vector bundle.

\end{lemma}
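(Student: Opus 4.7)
My plan is to follow the template of Lemma~\ref{lema_E2'} and Lemma~\ref{lema_privio2}: apply $\Hom(-,E)$ and $\Hom(-,E(-f))$ to the defining extension (\ref{fam_r}), split off the direct summand via $\Hom(\bigoplus_i I_{Z_i}(D_i),-)=\bigoplus_i\Hom(I_{Z_i}(D_i),-)$, and check all the requisite vanishings using conditions i)--vi), Lemma~\ref{lema_IZ}, and Serre duality. Throughout I will use the short exact sequence $0\to I_{Z_i}\to\mathcal{O}_X\to\mathcal{O}_{Z_i}\to 0$ together with the fact that $Z_i$ is $0$-dimensional (so $\mathcal{E}xt^k(\mathcal{O}_{Z_i},\mathcal{O}_X)=0$ for $k<2$) to collapse $\Hom$- and $\Ext$-groups into ordinary cohomology on $X$.

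\emph{Simplicity.} Applying $\Hom(-,E)$ to (\ref{fam_r}) produces
\[
 0\to \Hom({\textstyle\bigoplus_i} I_{Z_i}(D_i),E)\to \Hom(E,E)\to \Hom(\mathcal{O}_X(\fb f),E)\to\cdots
\]
First I twist (\ref{fam_r}) by $\mathcal{O}_X(-\fb f)$ and take cohomology; condition v) kills each $\Ho^0 I_{Z_i}(D_i-\fb f)$, so $\Hom(\mathcal{O}_X(\fb f),E)\cong\Ho^0 E(-\fb f)\cong K$. Next, applying $\Hom(I_{Z_i}(D_i),-)$ to (\ref{fam_r}) for fixed $i$, the $I_Z$-resolution reduces $\Hom(I_{Z_i}(D_i),\mathcal{O}_X(\fb f))$ to $\Ho^0\mathcal{O}_X(\fb f-D_i)$, which vanishes by ii); the same device reduces $\Hom(I_{Z_i}(D_i),I_{Z_j}(D_j))$ to $\Ho^0\mathcal{O}_X(D_j-D_i)$ for $j\neq i$, zero by i), while for $j=i$ it equals $K$ by Lemma~\ref{lema_IZ}. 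The connecting map $K\to\Ext^1(I_{Z_i}(D_i),\mathcal{O}_X(\fb f))$ then sends $1$ to the $i$-th component of the extension class of $E$; under the genericity assumption that each component is nonzero (implicit in ``nontrivial extension''), this map is injective, so $\Hom(I_{Z_i}(D_i),E)=0$ for every $i$, and hence $\Hom(E,E)\cong K$.

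\emph{Prioritariness.} Applying $\Hom(-,E(-f))$ to (\ref{fam_r}) gives
\[
 \cdots\to\Ext^2({\textstyle\bigoplus_i} I_{Z_i}(D_i),E(-f))\to\Ext^2(E,E(-f))\to\Ext^2(\mathcal{O}_X(\fb f),E(-f))\to 0
\]
and I must kill both outer terms. The right-hand term is $\Ho^2E(-(\fb+1)f)$; twisting (\ref{fam_r}) by $\mathcal{O}_X(-(\fb+1)f)$ and passing to cohomology kills $\Ho^2\mathcal{O}_X(-f)\cong\Ho^0\mathcal{O}_X(K_X+f)^{\vee}$, which is $0$ on any ruled surface by Lemma~\ref{lema}, and each $\Ho^2 I_{Z_i}(D_i-(\fb+1)f)\cong\Ho^0\mathcal{O}_X(\fb f-D_i+K_X+f)^{\vee}$, zero by iv). For the left-hand term, applying $\Hom(I_{Z_j}(D_j),-)$ to (\ref{fam_r}) twisted by $-f$, Serre duality reduces $\Ext^2(I_{Z_j}(D_j),\mathcal{O}_X((\fb-1)f))$ to the dual of $\Ho^0 I_{Z_j}(K_X+f+D_j-\fb f)$, which vanishes by vi); and $\Ext^2(I_{Z_j}(D_j),I_{Z_i}(D_i-f))$ vanishes for $i=j$ by the prioritariness of $I_{Z_j}(D_j)$ (Lemma~\ref{lema_IZ}), while for $i\neq j$ it reduces via Serre duality and the $I_Z$-resolution to $\Ho^0\mathcal{O}_X(D_j-D_i+K_X+f)^{\vee}$, killed by iii) (with $i,j$ interchanged).

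The main obstacle is the bookkeeping inherent to the direct-sum quotient: the simplicity step really needs each component of the extension class in $\bigoplus_j\Ext^1(I_{Z_j}(D_j),\mathcal{O}_X(\fb f))$ to be nonzero, a condition slightly stronger than global nontriviality but still generic. Once this is granted, conditions i)--vi) line up one-to-one with the six cohomological vanishings needed across the two long exact sequences, and the remainder is essentially mechanical.
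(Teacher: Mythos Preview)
Your proposal is correct and follows essentially the same approach as the paper's proof. The only cosmetic difference is that for prioritariness you apply the contravariant functor $\Hom(-,E(-f))$ to (\ref{fam_r}) whereas the paper applies the covariant $\Hom(E,-)$ to (\ref{fam_r}) twisted by $-f$; both routes invoke the same conditions iii), iv), vi) and Lemma~\ref{lema_IZ}, and your remark that simplicity requires each component $e_i$ of the extension class to be nonzero matches exactly the paper's explicit ``by construction'' assumption.
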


\begin{proof}
First of all we will see that $E$ is simple.
Applying the functor $\Hom(-,E)$ to the exact sequence (\ref{fam_r}), we get
$$0\rightarrow \Hom(\bigoplus_{i=1}^{r-1}I_{Z_i}(D_i),E)\rightarrow \Hom(E,E)\rightarrow \Hom(\mathcal{O}_X(\fb f),E)\rightarrow\cdots$$

\noindent\textbf{Claim 1: $\Hom(\bigoplus_{i=1}^{r-1}I_{Z_i}(D_i),E)=0$}

\noindent\textbf{Proof of Claim 1:}
Since $\Hom(\bigoplus_{i=1}^{r-1}I_{Z_i}(D_i),E)\cong \bigoplus_{i=1}^{r-1}\Hom(I_{Z_i}(D_i),E)$, it is enough to prove that, for any $i$, $\Hom(I_{Z_{i}}(D_{i}),E)=0$.

Let us fix $i_0\in I:=\{1,\dots,r-1\}$.  Applying  the functor $\Hom(I_{Z_{i_0}}(D_{i_0}),-)$ to (\ref{fam_r}) we get the long exact sequence
\begin{equation}
\label{simple}
   0\rightarrow \Hom(I_{Z_{i_0}}(D_{i_0}),\mathcal{O}_X(\fb f))\rightarrow \Hom(I_{Z_{i_0}}(D_{i_0}),E)
   \rightarrow \Hom(I_{Z_{i_0}}(D_{i_0}),\bigoplus_{i=1}^{r-1}I_{Z_i}(D_i))
\end{equation}
\[\rightarrow
   \Ext^1(I_{Z_{i_0}}(D_{i_0}),\mathcal{O}_X(\fb f))\rightarrow\cdots\]

On one hand, by ii),

$\begin{array}{ll}
 \Hom(I_{Z_{i_0}}(D_{i_0}),\mathcal{O}_X(\fb f))    & \cong\Ext^2(\mathcal{O}_X(\fb f),I_{Z_{i_0}}(D_{i_0}+K_X)) \\
    & \cong \Ho^2 \mathcal{O}_X(D_{i_0}-\fb f+K_X)\\
    & \cong\Ho^0 \mathcal{O}_X(\fb f-D_{i_0})=0.
\end{array}$

On the other hand, 
$$\Hom(I_{Z_{i_0}}(D_{i_0}),\bigoplus_{i=1}^{r-1}I_{Z_i}(D_i))\cong \bigoplus_{i=1}^{r-1}\Hom(I_{Z_{i_0}}(D_{i_0}),I_{Z_i}(D_i)).$$

If $i=i_0$, by Lemma \ref{lema_IZ}, we get  $$\Hom(I_{Z_{i_0}}(D_{i_0}),I_{Z_i}(D_i))\cong\Hom(I_{Z_{i_0}},I_{Z_{i_0}})\cong K.$$

Assume $i\neq i_0$. Applying the functor $\Hom(I_{Z_{i_0}},-)$ to the exact sequence $$0\rightarrow I_{Z_{i}}\rightarrow \mathcal{O}_X\rightarrow \mathcal{O}_{Z_{i}}\rightarrow0$$ twisted by $\mathcal{O}_X(D_i-D_{i_0})$, we get

$$\dim \Hom(I_{Z_{i_0}},I_{Z_i}(D_i-D_{i_0}))\leq \dim \Hom(I_{Z_{i_0}}, \mathcal{O}_X(D_i-D_{i_0})).$$

On the other hand, 

$\begin{array}{ll}
   \hom(I_{Z_{i_0}}, \mathcal{O}_X(D_i-D_{i_0}))  &= \ext^2(\mathcal{O}_X(D_i-D_{i_0}), I_{Z_{i_0}}(K_X) )$$
$$ \\
   & =\h^0\mathcal{O}_X(D_i-D_{i_0})  =0,
\end{array}$

where the last equality follows from  assumption i). 
Putting altogether,  
$$\bigoplus_{i=1}^{r-1}\Hom(I_{Z_{i_0}}(D_{i_0}),I_{Z_i}(D_i))\cong K.$$

In addition,
     $$\hom(I_{Z_{i_0}}(D_{i_0}),\mathcal{O}_X(\fb f))= \h^2(I_{Z_{i_0}}(D_{i_0}+K_X-\fb f)) 
     =\h^0\mathcal{O}_X(\fb f-D_{i_0}) =0,$$

where the last inequality follows from ii).
Hence, the exact sequence (\ref{simple}) turns to be  $$0\rightarrow \Hom(I_{Z_{i_0}},E)\rightarrow K\overset{\gamma_{i_0}}{\rightarrow} \Ext^1(I_{Z_{i_0}}(D_{i_0}),\mathcal{O}_X(\fb f))\rightarrow \cdots,$$
which means that $$\Hom(I_{Z_{i_0}},E)\cong\ker(\gamma_{i_0}).$$
By construction, the extension $e=(e_1,\dots,e_{r-1})\in\Ext^1(\bigoplus_{i=1}^{r-1}I_{Z_i}(D_i),\mathcal{O}_X(\fb f))$ is defined by nontrivial extensions $e_i\in\Ext^1(I_{Z_i}(D_i),\mathcal{O}_X(\fb f))$ and $\gamma_{i_0}$ is the map that sends $1\in K$ to the nontrivial extension $e_{i_0}\in \Ext^1(I_{Z_{i_0}}, \mathcal{O}_X(\fb f))$. Thus, $\gamma_{i_0}$ is injective and hence $$\Hom(I_{Z_{i_0}},E)\cong\ker(\gamma_{i_0})=0.$$

Consider the exact sequence (\ref{fam_r}) twisted by $\mathcal{O}_X(-\fb f)$ and  taking cohomology we get
$$0\rightarrow \Ho^0\mathcal{O}_X\rightarrow \Ho^0E(-\fb f)\rightarrow \bigoplus_{i=1}^{r-1}\Ho^0I_{Z_i}(D_i-\fb f)\rightarrow\cdots$$

\noindent Thus, by iv),  $$\Hom(\mathcal{O}_X(\fb f), E)\cong \Ho^0E(-\fb f)\cong \Ho^0\mathcal{O}_X\cong K.$$
Therefore, $1\leq\dim\Hom(E,E)\leq1$ and hence $E$ is simple.

\vspace{0.2cm}

Let us now see that $E$ is prioritary.
Applying the functor $\Hom(E,-)$ to the exact sequence (\ref{fam_r}) twisted by $\mathcal{O}_X(-f)$, we obtain
\begin{equation}
    \label{prioritary}
    \cdots\rightarrow \Ext^2(E,\mathcal{O}_X((\fb-1) f))\rightarrow \Ext^2(E,E(-f))\rightarrow \Ext^2(E,\bigoplus_{i=1}^{r-1}I_{Z_i}(D_i-f))\rightarrow0.
\end{equation}

\noindent\textbf{Claim 2: $\Ext^2(E,\mathcal{O}_X((\fb-1) f))=0$. }

\noindent\textbf{Proof of Claim 2:}
By duality, $\ext^2(E,\mathcal{O}_X((\fb-1) f))= \h^0E(K_X-(\fb-1)f).$
 Twisting (\ref{fam_r}) by $\mathcal{O}_X(K_X-(\fb-1) f)$ and taking cohomology we get 
$$0\rightarrow \Ho^0\mathcal{O}_X(K_X+f)\rightarrow \Ho^0E(K_X-(\fb-1) f)\rightarrow \bigoplus_{i=1}^{r-1}\Ho^0I_{Z_i}(D_i+K_X-(\fb-1) f)\rightarrow\cdots$$

\noindent By v), $\Ho^0\mathcal{O}_X(K_X+f)=\Ho^0I_{Z_i}(D_i+K_X-(\fb-1) f)=0,$ which implies  $$ \Ho^0E(K_X-(\fb-1) f)=0.$$

\noindent\textbf{Claim 3: $\Ext^2(E,\bigoplus_{i=1}^{r-1}I_{Z_i}(D_i-f))=0.$ }

\noindent\textbf{Proof of Claim 3:}
Notice that $$\Ext^2(E,\bigoplus_{i=1}^{r-1}I_{Z_i}(D_i-f))\cong \bigoplus_{i=1}^{r-1} \Ext^2(E,I_{Z_i}(D_i-f)).$$

Then, it is sufficient to prove that, for a fixed $i_0\in I:=\{1,\dots,r-1\}$, we have that $\Ext^2(E,I_{Z_{i_0}}(D_{i_0}-f))=0$. Applying the functor $\Hom(-,I_{Z_{i_0}}(D_{i_0}-f) )$ to (\ref{fam_r}), we get
\begin{equation}
    \label{prioritary2}
    \cdots \Ext^2(\bigoplus_{i=1}^{r-1}I_{Z_i}(D_i),I_{Z_{i_0}}(D_{i_0}-f) )\rightarrow \Ext^2(E,I_{Z_{i_0}}(D_{i_0}-f) )\rightarrow \Ext^2(\mathcal{O}_X(\fb f),I_{Z_{i_0}}(D_{i_0}-f) )\rightarrow0
\end{equation}

Let us first check that $$\Ext^2(\bigoplus_{i=1}^{r-1}I_{Z_i}(D_i),I_{Z_{i_0}}(D_{i_0}-f) )=0.$$

Since for any $i_0$, we have the exact sequence

$$\cdots\rightarrow \Ext^2(\mathcal{O}_X(D_i),I_{Z_{i_0}}(D_{i_0}-f)) \rightarrow \Ext^2(I_{Z_i}(D_i),I_{Z_{i_0}}(D_{i_0}-f))\rightarrow0$$
and by iii) $$\ext^2(\mathcal{O}_X(D_i),I_{Z_{i_0}}(D_{i_0}-f))=\h^0\mathcal{O}_X(D_i-D_{i_0}+K_X+f)=0, $$ we get $$\Ext^2(I_{Z_i}(D_i),I_{Z_{i_0}}(D_{i_0}-f))=0,$$ which implies that $$\Ext^2(\bigoplus_{i=1}^{r-1}I_{Z_i}(D_i),I_{Z_{i_0}}(D_{i_0}-f) )=0.$$

Finally, by iv),
$$\ext^2(\mathcal{O}_X(\fb f),I_{Z_{i_0}}(D_{i_0}-f) )=\h^0\mathcal{O}_X(\fb f+f-D_{i_0}+K_X)=0.$$ 

Putting altogether, by (\ref{prioritary}) and (\ref{prioritary2}), we see that $E$ is prioritary.  
\end{proof}

\begin{theorem}
\label{th_rgr1}
     Let $X$ be a ruled surface over a non singular curve $C$ of genus $g\geq0$ such that $\overline{e}:=e- 2(g-1)>0$. Fix $c_2\gg0$ and $r\geq4$ integers, and  $c_1= sC_0+\mathfrak{t}f\in \Pic(X)$ with $0\leq s\leq r-1$ and $t:=\deg(\mathfrak{t})\geq\max\{-r+2,2g-r\}$.
    Then, there exists a rank $r$ simple prioritary  vector bundle with $c_1(E)=c_1$ and $c_2(E)=c_2$ such that $\h^0E\geq k:=r-1+t-g$.
\end{theorem}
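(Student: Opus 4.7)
The plan is to realize $E$ as a non-trivial extension of the form
\[ 0 \to \mathcal{O}_X(\mathfrak{b}f) \to E \to \bigoplus_{i=1}^{r-1} I_{Z_i}(D_i) \to 0 \]
so that Lemma \ref{lema_previo3} applies and directly yields that $E$ is simple and prioritary. Concretely I would take $s$ of the $D_i$ of the form $C_0 + \mathfrak{a}_j f$ with $\deg \mathfrak{a}_j = d_2$ and the remaining $r-1-s$ of the form $\mathfrak{c}_k f$ with $\deg \mathfrak{c}_k = d_1$, the degrees satisfying $d_2 < d_1$ and the divisor classes chosen generically in $\Pic(C)$ of the specified degrees. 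With this setup, condition (i) of Lemma \ref{lema_previo3} follows because every cross-group difference $D_i - D_j$ has either $C_0$-coefficient $-1$ or $f$-degree $d_2 - d_1 < 0$, while same-group differences are generic non-trivial classes on $C$ and hence non-effective (for $g=0$, a small further separation of the degrees within each group, combined with varying $s_i \in \{0,1\}$, is needed to break symmetries).

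Condition (ii) is automatic for indices in the $C_0$-group (since then $\mathfrak{b}f - D_i$ has $C_0$-coefficient $-1$), and for the $f$-group reduces to the inequality $b := \deg \mathfrak{b} < d_1$; conditions (iii) and (iv) are automatic because $K_X + f$ has $C_0$-coefficient $-2$, so adding it to any $D_i - D_j$ or $\mathfrak{b}f - D_i$ forces the total $C_0$-coefficient to be at most $-1$, which is non-effective by Lemma \ref{lema}; and conditions (v), (vi) are arranged by taking each $Z_i$ generic and of sufficient length via Proposition \ref{zgen}. Non-triviality of the extension class follows from a Riemann--Roch lower bound on $\ext^1(\bigoplus I_{Z_i}(D_i), \mathcal{O}_X(\mathfrak{b}f))$, which is large positive once $\sum_i |Z_i| \gg 0$.

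The $|Z_i|$'s are then used as free parameters tuned so that the Chern class identity
\[ c_2(E) = \sum_i |Z_i| + \sum_{i < j} D_i \cdot D_j + \mathfrak{b} f \cdot \sum_i D_i = c_2 \]
holds; this is always solvable since $c_2 \gg 0$. For the section count, choosing $b$ so that $\h^1 \mathcal{O}_X(\mathfrak{b} f) = 0$ makes the long exact sequence in cohomology of the extension give
\[ \h^0 E \geq \h^0 \mathcal{O}_X(\mathfrak{b} f) + \sum_i \h^0 I_{Z_i}(D_i), \]
and a Riemann--Roch estimate on $\mathfrak{b}$ together with the inequalities (\ref{desigcoh}) applied to each $\h^0 I_{Z_i}(D_i)$ should yield the advertised lower bound $\h^0 E \geq r - 1 + t - g$.

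The main obstacle is the combinatorial step of choosing $b$, $d_1$, and $d_2$ simultaneously so that the numerical relation $b + s d_2 + (r-1-s) d_1 = t$ holds, the non-effectiveness inequalities $b < d_1$ and $d_2 < d_1$ are satisfied, and the resulting section count reaches $r - 1 + t - g$. The hypotheses $\overline{e} > 0$ and $t \geq \max\{-r + 2, 2g - r\}$ are exactly what make this system feasible across the entire range $0 \leq s \leq r - 1$; once the parameters are chosen, the rest of the proof is a direct check of the hypotheses of Lemma \ref{lema_previo3} together with routine Riemann--Roch bookkeeping.
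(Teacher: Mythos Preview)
Your overall framework --- realizing $E$ as a nontrivial extension of $\bigoplus_{i} I_{Z_i}(D_i)$ by $\mathcal{O}_X(\mathfrak{b}f)$ and invoking Lemma~\ref{lema_previo3} --- is exactly the paper's strategy. The gap lies in your specific choice of the $D_i$.

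\textbf{The case $g=0$ breaks.} With the $C_0$-coefficients of the $D_i$ confined to $\{0,1\}$, at least two of the $r-1\ge 3$ divisors share a $C_0$-coefficient; call them $D_i$ and $D_j$. Then $D_i-D_j$ and $D_j-D_i$ are both of the form $\alpha f$ with $\alpha\in\Pic(C)$. For $g=0$ one has $\Pic(C)\cong\mathbb{Z}$, so either $\alpha=0$ (and $D_i-D_j$ is trivially effective) or one of $\pm\alpha$ has nonnegative degree, making one of $\pm(D_i - D_j)$ effective. Either way condition~(i) of Lemma~\ref{lema_previo3} fails. Your proposed fix (separating degrees within a group, varying $s_i\in\{0,1\}$) cannot help: with only two available $C_0$-values you can never make $r-1\ge 3$ of them pairwise distinct, which is precisely what condition~(i) forces when $g=0$. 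The paper sidesteps this entirely by taking the $D_i$ with \emph{pairwise distinct} $C_0$-coefficients spread over a wide range (one very negative, the middle ones $2,3,\dots,r-2$, the last one $r-1+s$), arranged so that for each ordered pair one difference has negative $C_0$-coefficient and the other negative $f$-degree; condition~(i) then becomes purely numerical and genus-independent.

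\textbf{The section bound is not secured.} The paper obtains $\h^0E\ge r-1+t-g$ entirely from $\h^0\mathcal{O}_X(\mathfrak{b}f)=b+1-g$ with $b=r-2+t$; the terms $\h^0I_{Z_i}(D_i)$ play no role. In your set-up the identity $b+sd_2+(r-1-s)d_1=t$ together with $b=r-2+t$ forces $sd_2+(r-1-s)d_1=2-r<0$. When $s\le r-2$ this drives $d_1$ down (for instance $s=0$ gives $d_1=(2-r)/(r-1)<0$), directly contradicting your requirement $b<d_1$ for condition~(ii). Recovering the shortfall via $\sum_i \h^0 I_{Z_i}(D_i)$ is not automatic either: at least one $|Z_i|$ must be $\gg 0$ to reach the prescribed $c_2$, annihilating that summand, and you give no argument that the remaining ones compensate. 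Allowing the $C_0$-coefficients of the $D_i$ to range widely, as the paper does, decouples the first-Chern-class constraint from the $f$-degrees and lets one take $b=r-2+t$ without tension.
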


\begin{proof}
First of all, let $\mathfrak{b}\in\Pic(X)$ be a divisor of degree $b=r-2+t$ and consider  $r-1$ divisors $D_i\in\Pic(X)$ such that: 

$\begin{array}{ll}
D_1& \equiv  -(\sum_{j=2}^{r-1}j)C_0+(r-1+t)f,\\
    D_i & \equiv iC_0+(r-i)f, \thinspace \thinspace \text{for} \thinspace 2\leq i\leq r-2 \thinspace\thinspace \text{and} \\
   D_{r-1} & \equiv  (r-1+s)C_0-(b+\sum_{j=2}^{r-1}j)f.  
\end{array}$

Let us define 

$\begin{array}{ll}
   f(b):=  & r-2+\sum_{i=2}^{r-2}\max\{0,\chi\mathcal{O}_X(\fb f-D_i), (i+1)(r-i-b+1)\}+\sum_{i<j}D_i\cdot D_j \\
  & +(\fb f)(\sum_{i=1}^{r-1}D_i) +\max\{0,\chi\mathcal{O}_X(\fb f- D_{r-1})\}
\end{array}$

\noindent and notice that, since $c_2\gg0$ and $f(b)$ is independent of $c_2$, we can assume   $c_2>f(b)$. Let $Z_i$ be a generic 0-dimensional subschemes of length 

$\begin{array}{ll}
    |Z_i|&=\max\{0,\chi\mathcal{O}_X(\fb f-D_i), (i+1)(r-i-b+1)\}+1, \thinspace \thinspace\text{for} \thinspace 2\leq i\leq r-2, \\
   |Z_{r-1}| & = c_2-1-\sum_{i=2}^{r-2}|Z_i|-\sum_{i<j}D_i\cdot D_j -(\fb f)(\sum_{i=1}^{r-1}D_i),  
\end{array}$

\noindent and satisfying  $Z_i\cap Z_j=\emptyset$ for $i\neq j$.

\vspace{0.2cm}
%\noindent\textbf{Claim 1: $|Z_{r-1}|>\max\{0,\chi\mathcal{O}_X(\fb f-D_{r-1})\}$}

%\noindent\textbf{Proof of Claim 1:}
Notice that, since $c_2>f(b)$, we get by definition that 
\begin{equation}
\label{long_r-12}
    |Z_{r-1}|=  c_2-1-\sum_{i=2}^{r-2}|Z_i|-\sum_{i<j}D_i\cdot D_j -(\fb f)(\sum_{i=1}^{r-1}D_i)>
\end{equation}
$$f(b)-1-\sum_{i=2}^{r-2}|Z_i|-\sum_{i<j}D_i\cdot D_j -(\fb f)(\sum_{i=1}^{r-1}D_i)=\max\{0,\chi\mathcal{O}_X(\fb f- D_{r-1})\}.$$

Keeping the above notation, let us consider   the family $\mathcal{F}$ of rank $r$ vector bundles $E$ on $X$ with $c_1(E)=c_1$ and $c_2(E)=c_2$ given by a nontrivial extension of type
\begin{equation}
    \label{fam_r2}
    0\rightarrow \mathcal{O}_X(\fb f)\rightarrow E\rightarrow \bigoplus_{i=1}^{r-1}I_{Z_i}(D_i)\rightarrow0
\end{equation}
where  $Z_1\in X$ is a point such that $Z_1\not\in Z_i$ for any $2\leq i\leq r-1$.

\noindent\textbf{Claim : $\mathcal{F}\neq\emptyset$.}

\noindent\textbf{Proof of the Claim :}
First of all let us see that the dimension of the space of extensions of type (\ref{fam_r2}) is positive. Notice that, for any $i$, we have that
$$\ext^1(I_{Z_i}(D_i),\mathcal{O}_X(\fb f))\geq-\chi I_{Z_i}(D_i+K_X-\fb f)=-\chi I_{Z_i}(\fb f-D_i).$$

Notice that, since $b<r-1+t$,  $\fb f-D_1$ and $D_1-\fb f+K_X$ are non effective divisors. Therefore, 
$-\chi I_{Z_1}(\fb f-D_1)=1-\chi\mathcal{O}_X(\fb f-D_1)>0.$

For $2\leq i\leq r-2,$ by construction $$-\chi I_{Z_i}(\fb f-D_i)=|Z_{i}|-\chi\mathcal{O}_X(\fb f-D_i)>0.$$

Finally, for $i=r-1$, $$-\chi I_{Z_{r-1}}(\fb f-D_i)=|Z_{r-1}|-\chi\mathcal{O}_X(\fb f-D_i)>0,$$
where the last inequality follows from (\ref{long_r-12}).

Therefore $\ext^1(I_{Z_i}(D_i),\mathcal{O}_X(\fb f))>0$ for every $i$ and hence the dimension of the space of extensions of type (\ref{fam_r2}) is positive.

Let us now see that the couple  $(\mathcal{O}_X(D_i-\fb f+K_X),Z_i)$ satisfies the Cayley-Bacharach property for every $i$.
 First of all, since the divisor $D_1-\fb f+K_X$ is non effective,  $$\h^0I_{Z_1}(D_1-\fb f+K_X)=0.$$ For $2\leq i\leq r-2$, since   $Z_i$ is generic and $$\h^0\mathcal{O}_X(D_i-\fb f+K_X)< i(r-i-b+1)<|Z_i|,$$ we have  $\h^0I_{Z_i}(D_i-\fb f+K_X)=0$. Finally, since the divisor $D_{r-1}-\fb f+K_X$ is non effective, we also have $\h^0I_{Z_{r-1}}(D_{r-1}-\fb f+K_X)=0$. Therefore, $(\mathcal{O}_X(D_i-\fb f+K_X),Z_i)$ satisfies the Cayley-Bacharach property for every $i$ and,  by \cite[Proposition 2.2]{Qin}, this guaranties that $E$ is a rank $r$ vector bundle with $c_1(E)=sC_0+\mathfrak{t}f$ and $$c_2(E)=1+\sum_{i=2}^{r-1}|Z_i|+\sum_{i<j}D_i\cdot D_j+(\fb f)(\sum_{i=1}^{r-1}D_i)=c_2.$$

Hence $\mathcal{F}$ is non empty.

Notice that the divisors $D_i$ verify all the assumptions of Lemma \ref{lema_previo3}. In fact, i)-iv) are straigthfoward and also v) and vi) for the cases $i=1$ and $i=r-1$. For $2\leq i\leq r-2$, to see v) and vi) we have two possibilities. If $D_i-\fb f$ and $K_X+f+D_i-\fb f$ are non effective divisors, we directly get $\h^0I_Z(D_i-\fb f)=0$ and  $\h^0I_Z(K_X+f+D_i-\fb f)=0$. Otherwise we use the fact that $Z_i$ is generic of length $|Z_i|>\h^0\mathcal{O}_X(D_i-\fb f+K_X+f)$ and $|Z_i|>\h^0\mathcal{O}_X(D_i-\fb f)$  since $$\h^0\mathcal{O}_X(D_i-\fb f)\leq (i+1)(r-i-b+1)$$ and $$\h^0\mathcal{O}_X(D_i-\fb f+K_X+f)\leq (i-1)(r-i-b-\overline{e}+2).$$
Hence, $E$ is a simple prioritary vector bundle.

Moreover, since $b=r-2+t\geq \max\{2(g-1),0\}$, by (\ref{fam_r2}) we get $$\h^0E\geq \h^0\mathcal{O}_X(\fb f)=b+1-g=r-1+t-g.$$

\end{proof}

\begin{remark}
    The case $\overline{e}=0$ can be proved analogously for $b=r-3+t$ We consider the same divisors $D_i$ as in Proof of Theorem \ref{th_rgr1},  we define the function  
    
   $ \begin{array}{ll}
        f(b)= &r-2+\sum_{i=2}^{r-2}\max\{\chi\mathcal{O}_X(\fb f-D_i), (i+1)(r-i-b+1), (i-1)(r-i-b+2)\}  \\
        & +\sum_{i<j}D_i\cdot D_j+(\fb f)(\sum_{i=1}^{r-1}D_i)+\max\{0,\chi\mathcal{O}_X(\fb f-D_{r-1})\}
    \end{array}$
and we can assume that $c_2>f(b)$. For $1\leq i\leq r-2$, we consider  generic 0-dimensional subschemes $Z_i$ of length $$|Z_i|=\max\{0,(i+1)(r-i-b+1), (i-1)(r-i-b+2)\}+1.$$ Finally, we take $Z_1$ and $Z_{r-1}$ as in Proof of Theorem \ref{th_rgr1}. 
\end{remark}

\begin{remark}
   The case $\overline{e}=-1$ can be proved analogously to Theorem \ref{th_rgr1} for $b=r-6+t$. We define the divisors $D_i$  as
    
   $\begin{array}{ll}
D_1& \equiv  -(\sum_{j=2}^{r-1}j)C_0+(r-3+t)f\\
    D_i & \equiv iC_0+(r-3i)f, \thinspace \text{for} \thinspace 2\leq i\leq r-2 \\
   D_{r-1} & \equiv  (r-1+s)C_0-(b+\sum_{j=2}^{r-1}(r-3j))f.  
\end{array}$

\noindent and the function 

$\begin{array}{ll}
   f(b):=  & r-2+\sum_{i=2}^{r-2}\max\{0,\chi\mathcal{O}_X(\fb f-D_i), (i+1)(r-3i-b+1)\}+\sum_{i<j}D_i\cdot D_j \\
  & +(\fb f)(\sum_{i=1}^{r-1}D_i) +\max\{0,\chi\mathcal{O}_X(\fb f- D_{r-1})\}
\end{array}$
Finally, we consider $Z_i$  generic 0-dimensional subschemes of length 

$\begin{array}{ll}
    |Z_i|&=\max\{0,\chi\mathcal{O}_X(\fb f-D_i), (i+1)(r-3i-b+1),(i-1)(r-3i-b+3)\}+1, \thinspace \thinspace\text{for} \thinspace 2\leq i\leq r-2,
\end{array}$
and $Z_1$ and $Z_{r-1}$ as in Theorem \ref{th_rgr1}.
\end{remark}

Let us now consider the case $\overline{e}<-1$.

\begin{theorem}
\label{th_rg_r2}
    Let $X$ be a ruled surface over a non singular curve $C$ of genus $g\geq0$ such that $\overline{e}:=e- 2(g-1)<-1$. Fix $r\geq4$ and $c_2\gg0$  integers and  $c_1= sC_0+\mathfrak{t}f\in \Pic(X)$ with $0\leq s \leq r-1$ and $$t:=\deg(\mathfrak{t})>\max\{0,2g-2\}+\frac{[(r-1)(r-2)+2]\overline{e}}{2}.$$
    Then there exists a rank $r$ simple prioritary vector bundle $E$ with $c_1(E)=c_1$ and $c_2(E)=c_2$, such that $\h^0E\geq t-g-[\frac{(r-1)(r-2)+2}{2}]\overline{e}$.
\end{theorem}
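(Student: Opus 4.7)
The plan is to follow the scheme of Theorem \ref{th_rgr1} and its two remarks, adapting the numerical data to the regime $\overline{e} < -1$: I build $E$ as a general nontrivial extension
\[0\rightarrow\mathcal{O}_X(\fb f)\rightarrow E\rightarrow \bigoplus_{i=1}^{r-1}I_{Z_i}(D_i)\rightarrow 0,\]
choose $b$, the $D_i$ and the $Z_i$ so that the hypotheses of Lemma \ref{lema_previo3} and of the Cayley--Bacharach criterion \cite[Proposition 2.2]{Qin} are satisfied, and read the lower bound on $\h^0 E$ off the line sub-bundle $\mathcal{O}_X(\fb f)$.

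First I would set $b := t - \tfrac{(r-1)(r-2)+2}{2}\overline{e}$, which is an integer since $(r-1)(r-2)$ is even, and the hypothesis on $t$ gives $b > \max\{0, 2g-2\}$, so $\h^0\mathcal{O}_X(\fb f) = b+1-g \geq t - g - \tfrac{(r-1)(r-2)+2}{2}\overline{e}$ matches the target lower bound. The divisors $D_1, \ldots, D_{r-1}$ should be chosen so that $\fb f + \sum_i D_i \equiv c_1 = sC_0 + \mathfrak{t}f$. Following the pattern of the two remarks after Theorem \ref{th_rgr1}, for $2 \leq i \leq r-2$ I take $D_i \equiv iC_0 + (\alpha - (1-\overline{e})i)f$ for a convenient constant $\alpha$ (the slope $1-\overline{e}$ is the new ingredient specific to $\overline{e} < -1$), and choose $D_1 \equiv -\bigl(\sum_{j=2}^{r-1} j\bigr)C_0 + \lambda_1 f$ and $D_{r-1} \equiv (r-1+s)C_0 + \lambda_{r-1} f$, with $\lambda_1, \lambda_{r-1}$ absorbing the leftover $f$-degree so that the total first Chern class equals $c_1$.

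The next step is to verify the four non-effectivity conditions of Lemma \ref{lema_previo3} for these $D_i$. For $i < j$, the difference $D_i - D_j$ has negative $C_0$-coefficient and is non-effective by Lemma \ref{lema}; for $i > j$, the slope $1-\overline{e}$ in the $f$-coefficient of the middle $D_i$ is tuned precisely so that the $f$-coefficient of $D_i - D_j + K_X + f$ is also negative, again giving non-effectivity by Lemma \ref{lema}. Conditions involving $\fb f - D_i$ and $\fb f - D_i + K_X + f$ are checked analogously, with the extreme divisors $D_1, D_{r-1}$ handled separately using their extreme $C_0$-coefficients. Then I take $Z_i$ generic $0$-dimensional subschemes, with $|Z_i|$ large enough to force $\h^0 I_{Z_i}(D_i - \fb f) = 0 = \h^0 I_{Z_i}(D_i - \fb f + K_X + f)$ whenever the ambient cohomology is non-zero (using Proposition \ref{zgen}), and also to ensure the Cayley--Bacharach property for $(\mathcal{O}_X(D_i - \fb f + K_X), Z_i)$; the length $|Z_{r-1}|$ is then determined by the requirement $c_2(E) = c_2$, and the assumption $c_2 \gg 0$ makes it large enough for all the required inequalities.

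Finally, Riemann--Roch gives $\ext^1(I_{Z_i}(D_i), \mathcal{O}_X(\fb f)) \geq -\chi I_{Z_i}(\fb f - D_i) > 0$ for every $i$, so the extension space is non-trivial; the Cayley--Bacharach data ensure via \cite[Proposition 2.2]{Qin} that a general extension $E$ is locally free of rank $r$ with the prescribed Chern classes. Lemma \ref{lema_previo3} then yields that $E$ is simple and prioritary, and the lower bound on $\h^0 E$ follows from the exact sequence and the computation of $\h^0 \mathcal{O}_X(\fb f)$. The main obstacle is the combinatorial bookkeeping of the $f$-coefficients of the $D_i$: the slope must grow linearly with $|\overline{e}|$ to counteract the large $f$-coefficient of $K_X = -2C_0 - \overline{e}f$, and the quadratic total accumulation is what produces the term $\tfrac{(r-1)(r-2)+2}{2}\overline{e}$ both in the hypothesis on $t$ and in the lower bound on $\h^0 E$.
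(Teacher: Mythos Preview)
Your outline is correct and uses the same overall strategy as the paper---build $E$ as an extension of $\bigoplus_{i=1}^{r-1} I_{Z_i}(D_i)$ by $\mathcal{O}_X(\fb f)$, verify the hypotheses of Lemma \ref{lema_previo3} and of \cite[Proposition 2.2]{Qin}, and read off $\h^0E$ from the sub-line-bundle---but your concrete choice of data differs from the paper's. You transplant the template of Theorem \ref{th_rgr1} wholesale: a special $D_1$ with very negative $C_0$-coefficient, a special $D_{r-1}$ with $C_0$-coefficient $r-1+s$, middle divisors $D_i \equiv iC_0 + (\alpha - (1-\overline{e})i)f$, and generic $Z_i$ whose lengths must be tuned to kill several $\h^0$'s. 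The paper instead exploits $\overline{e}<-1$ to take the uniform choice $D_i \equiv i(C_0+\overline{e}f)$ for \emph{all} $1\le i\le r-2$ and sets $D_{r-1}=c_1-\fb f-\sum_{i=1}^{r-2}D_i$, with $b=t-1-\tfrac{(r-1)(r-2)+2}{2}\overline{e}$; the point is that for this slope the divisors $D_i-\fb f$, $D_i-\fb f+K_X$ and $D_i-\fb f+K_X+f$ (for $i\le r-2$) all have a negative numerical coefficient automatically, so each $Z_i$ with $i\le r-2$ can be a single point and only $|Z_{r-1}|$ needs to absorb $c_2$. Your route works, but carries bookkeeping inherited from the $\overline{e}>0$ regime that is no longer needed here; the paper's route is shorter precisely because $\overline{e}<-1$ lets the slope $\overline{e}$ alone do the work that in Theorem \ref{th_rgr1} required the extreme $D_1$ and the careful choice of $|Z_i|$.
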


\begin{proof}
Since the proof is similar to the one of Theorem \ref{th_rgr1}, we highlight the main differences and left the details to the reader.
First of all let us consider $\fb\in\Pic(X)$ of degree $$b=t-1-[\frac{(r-1)(r-2)+2}{2}]\overline{e}$$ 
 and  define the following $r-1$ divisors $D_i\in\Pic(X)$ 

$\begin{array}{ll}
    D_i &\equiv i(C_0+\overline{e})f, \thinspace \text{for} \thinspace\thinspace 1\leq i\leq r-2 \\
   D_{r-1} &= c_1-\fb f-\sum_{i=1}^{r-2}D_i  
\end{array}$

and  $$f(b):=r-2+\sum_{i<j}D_i\cdot D_j +(\fb f)(\sum_{i=1}^{r-1}D_i)+\max\{0,\chi\mathcal{O}_X(\fb f- D_{r-1}),0\}.$$ Notice that, since $c_2\gg0$,
 $c_2>f(b)$. Let us consider $Z_{r-1}$  a 0-dimensional subscheme of length 

$\begin{array}{ll}
   |Z_{r-1}| &= c_2-(r-2)-\sum_{i<j}D_i\cdot D_j -(\fb f)(\sum_{i=1}^{r-1}D_i).  
\end{array}$

\vspace{0.2cm}

Notice that, since $c_2>f(b)$, 
$ |Z_{r-1}|>\max\{0,\chi\mathcal{O}_X(\fb f- D_{r-1})\}.$

Keeping the above notation, consider  the family $\mathcal{F}$  of rank $r$ vector bundles $E$ on $X$ given by a nontrivial extension of type
\begin{equation}
    \label{fam_r3}
    0\rightarrow \mathcal{O}_X(\fb f)\rightarrow E\rightarrow [\bigoplus_{i=1}^{r-2}I_{P_i}(D_i)]\oplus I_{Z_{r-1}}(D_{r-1})\rightarrow0,
\end{equation}
where $\fb\in \Pic(C)$ is a divisor of degree $b$ and $P_1,\dots,P_{r-2}\in X$ are different points such that $$Z_{r-1}\cap \{P_1,\dots,P_{r-2}\}=\emptyset.$$

We check that $\mathcal{F}$ is non-empty and $E$ is a rank r vector bundle on $X$ with $c_1(E)=sC_0+\mathfrak{t}f$ and $c_2(E)=c_2>f(b)$
and by Lemma \ref{lema_previo3} it is simple and prioritary. Finally, since $b\geq \max\{0,2g-2\}$, by (\ref{fam_r3}) we get $$\h^0E\geq \h^0\mathcal{O}_X(\fb f)=b+1-g=t-g-[\frac{(r-1)(r-2)+2}{2}].$$ 
\end{proof}

\begin{remark}
%    Notice that, if $E$ is a rank-4 vector bundle with $c_1(E)=f$,  the results given in this section are the following:

 %   \begin{itemize}
  %      \item[-]  If $\overline{e}>0$, we get  $\h^0E\geq 4-g$;
   %     \item[-]If $\overline{e}=0$, we get  $\h^0E\geq 4-g$;
    %    \item[-] If $\overline{e}=-1$, we get  $\h^0E\geq 4-g$;
     %   \item[-] If $\overline{e}<-1$, we get  $\h^0E\geq 4-g$,

    %\end{itemize}

%\noindent while Theorems \ref{thrk4''} and \ref{rango4} tells us that $\h^0E\geq1$.  

Notice that for the case $c_1=C_0+\fm f$ and $r=4$, Theorem \ref{th_rk4} gives us a stronger result than the Theorems of this section.
       
\end{remark}

    \end{document}